 \newtheorem{theorem}{Theorem}[section]
 \newtheorem{corollary}[theorem]{Corollary}
 \newtheorem{lemma}[theorem]{Lemma}
 \newtheorem{proposition}[theorem]{Proposition}
\theoremstyle{definition}
 \newtheorem{definition}[theorem]{Definition}
 \newtheorem{remark}[theorem]{Remark}
\newcommand{\black}{\color{black}}
\def\R{\mathbb{R}}
\def\E{\mathbb{E}}
\def\Var{\mathbb{V}\mathrm{ar}}
\renewcommand{\qed}{\hfill$\square$}
\title[Total variation bound for Hadwiger-Wills information content]{\black \small Total variation bound for Hadwiger's functional using Stein's method}
\author{Valentin Garino}
\address{Valentin Garino, Uppsala University, 
Box 480, 751 06 Uppsala, Sweden}
\email{valentin.garino@math.uu.se} 
\thanks{}
\author{Ivan Nourdin}
\address{Ivan Nourdin, Universit\'e du Luxembourg, 
Unit\'e de Recherche en Math\'ematiques,
Maison du Nombre,
6 avenue de la Fonte,
L-4364 Esch-sur-Alzette,
Grand Duch\'e du Luxembourg}
\email{ivan.nourdin@uni.lu} 
\thanks{}
\date{\today}
\begin{document}
\maketitle

\medskip

\begin{abstract}
Let $K$ be a convex body in $\mathbb{R}^d$. Let $X_K$ be a $d$-dimensional random vector distributed according to the Hadwiger-Wills density $\mu_K$ associated with $K$, defined as $\mu_K(x)=ce^{-\pi {\rm dist}^2(x,K)}$, $x\in \mathbb{R}^d$. Finally, let the information content $H_K$ be defined as $H_K={\rm dist}^2(X_K,K)$.
The goal of this paper is to study the fluctuations of $H_K$ around its expectation as the dimension $d$ go to infinity.
Relying on Stein's method and Brascamp-Lieb inequality, we compute an explicit bound for the total variation distance between $H_K$ and its Gaussian counterpart.\\

\noindent{{\bf Keywords: } convex body, Steiner formula, intrinsic volumes, Wills functional, Stein's method, central limit theorem.\\
}
\end{abstract}

\section{Introduction}\label{Intro}

This paper is concerned with the asymptotic behavior of the intrinsic volumes of a convex body when 
the ambient dimension goes to infinity. Since our work is aimed at a rather probabilistic audience, let us start with a few reminders about these geometric
objects, before going into more detail on their probabilistic aspects.

\subsection{Intrinsic volumes of a convex body}\label{intrrr}

Throughout all the paper, $K\subset\R^d$ denotes a convex body, that is, a compact convex set with nonempty interior.
Its dimension, noted ${\rm dim} K$, takes values in $\{0,1,2,\ldots ,d\}$ and is by definition the dimension of its affine hull. 
When $K$ has dimension $j$, we define the $j$-dimensional volume ${\rm Vol}_j(K)$ to be the Lebesgue measure of $K$, computed relative to its affine hull.  
Finally, we write
$\mathbb{B}^j$ to indicate the Euclidean unit ball of $\R^j$.

The {\it Steiner formula} (e.g. \cite[Section 1]{Schneider2007Integral}) asserts that
${\rm Vol}_d(K+r\mathbb{B}^d)$ is a polynomial in $r>0$ written
\begin{equation}\label{steiner}
{\rm Vol}_d(K+r\mathbb{B}^d)=\sum_{k=0}^d \kappa_{d-k}\,v_k(K)\,r^{d-k},
\end{equation}
where the multiplicative constants $\kappa_{d-k}={\rm Vol}_{d-k}(\mathbb{B}^{d-k})=\pi^{\frac{d-k}2}/\Gamma(1+\frac{d-k}2)$ 
are here to guarantee that the $k$th intrinsic volume $v_k(K)$ is really intrinsic to $K$,
in the sense that it does not depend on the dimension in which the convex body is embedded (see below for a precise statement).

The commonly encountered intrinsic volumes are the {\it Euler characteristics} $v_0(K)=1$, the {\it intrinsic width} $v_1(K)$, (half) the {\it surface area} $v_{d-1}(K)$ and the {\it $d$-dimensional volume} $v_d(K)$. Simple properties satisfied by intrinsic volumes include:
\begin{itemize}
\item (positivity) $v_k(K)\geq 0$ for all $k\in\{0,\ldots,d\}$;
\item (homogeneity) $v_k(rK)=r^kv_k(K)$ for all $k\in\{0,\ldots,d\}$ and all $r\in(0,\infty)$;
\item (monotonicity) $v_k(K)\leq v_k(L)$ for all $k\in\{0,\ldots,d\}$, whenever $K\subset L$ are two convex bodies of $\R^d$;
\item (intrinsic) $v_k(K\times\{0\})=v_k(K)$ for all $k\in\{0,\ldots,d\}$,  whenever $K\subset\R^d$, $0\in\R^m$ and $K\times\{0\}\subset \R^{d+m}$;
\item (valuation) $v_k(K\cup L)+v_k(K\cap L)=v_k(K)+v_k(L)$ for all $k\in\{0,\ldots,d\}$, whenever $K,L\subset\R^d$ are two convex bodies such that $K\cup L$ is also a convex body;
\item (continuity) $v_k(K_n)\to v_k(K)$ if $K_n\to K$ in the Hausdorff metric.
\end{itemize}

There are only few examples of convex bodies for which we have explicit expression for their intrinsic volumes.
Two of them are the hypercube: 
\begin{equation}\label{binom}
v_k([0,1]^d)=\binom{k}{d},\quad 0\leq k\leq d,
\end{equation}
(see, e.g., Lotz et al.  \cite[Section 5]{lotz2020concentration} for more results about rectangular parallelotopes),
and the unit ball:
$$
v_k(\mathbb{B}^d)=\binom{k}{d}\frac{\kappa_d}{\kappa_{d-k}}, \quad 0\leq k\leq d.
$$
For further use, we observe
that 
\begin{equation}\label{further}
v_1(\mathbb{B}^d)\sim \sqrt{2\pi d}\,\mbox{ as $d\to\infty$}.
\end{equation}

In what follows, we will also rely on a deeper result,
observed independently by Chevet \cite{chevet1976processus} and Mc Mullen \cite{mcmullen1975non}  as a consequence of the Alexandrov-Fenchel inequalities:
\begin{equation}\label{ulc}
\mbox{The sequence $\{v_k(K)\}_{0\leq k\leq d}$ is ultra log-concave.}
\end{equation}
We recall that a sequence $(x_k)\subset\R_+$ is said to be {\it ultra log-concave} whenever it satisfies that $kx_k^2\geq (k+1)x_{k+1}x_{k-1}$ for all $k$.

\subsection{Hadwiger-Wills information content}\label{rv} 
Now let's bring probability into play.
Consider the {\it total} intrinsic volume of $K$, defined as
\begin{equation}\label{Wills}
W(K) = \sum_{k=0}^d v_k(K),
\end{equation}
and introduce a discrete random variable $V_K:\Omega\to\{0,\ldots,d\}$ distributed as
$$
\mathbb{P}(V_K=k)=\frac{v_k(K)}{W(K)},\quad 0\leq k\leq d.
$$

In the case $K=[0,1]^d$, we immediately deduce from (\ref{binom}) that $V_K$ is distributed according to the binomial distribution $\mathcal{B}(d,\frac12)$ and, thus, satisfies a {\it central limit theorem} when $d\to\infty$.
Is this a general phenomenon? 

Given the discrete nature of the problem and the fact that intrinsic volumes are difficult to estimate in high dimension, answering this question directly at a sufficient level of generality seemed too difficult to us at first attempt. This is why we decided to study the asymptotic behavior of $V_K$ in a somewhat diverted way, which we now describe.

In \cite{Had75} (see also \cite{W}), the following integral representation for the total intrinsic volume $W(K)$ is stated
$$W(K)=\int_{\R^d} e^{-\pi\,{\rm dist}^2(x,K)} dx,$$
where ${\rm dist}(\cdot,K)$ stands for the distance to $K$, defined for $x\in\R^d$ as $${\rm dist}(x,K)=\|x-\Pi_K(x)\|,$$ 
with $\Pi_K:\R^d\to K$ the projection onto $K$.
Introducing a parameter $\lambda>0$ and using the homogeneity of the $v_k(K)$'s leads to
\begin{eqnarray*}
&& \sum_{k=0}^d \lambda^k v_k(K) =W(\lambda K)= \int_{\R^d} e^{-\pi\,{\rm dist}^2(x,\lambda K)} dx= \lambda^d \int_{\R^d} e^{-\pi\,{\rm dist}^2(\lambda x,\lambda K)} dx,
\end{eqnarray*}
implying in turn
$$
\sum_{k=0}^d \lambda^{k-d} v_k(K) =  \int_{\R^d} e^{-\pi\,\lambda^2 {\rm dist}^2(x,K)} dx.
$$
Now, observe that
$$
\lambda^{k-d} = \frac{1}{\Gamma(\frac12(d-k))}\int_0^\infty r^{-1+\frac{d-k}2}e^{-\lambda^2 r}dr, \quad k=0,\ldots,d-1.
$$
We deduce, with $g(r)=e^{(1-\lambda^2)r}$
\begin{eqnarray}
&&\int_{\R^d} g(\pi\,{\rm dist}^2(x,K))e^{-\pi\,{\rm dist}^2(x,K)}dx \label{distanceintegral}\\
&=& g(0)v_d(K) + \sum_{k=0}^{d-1} \left(\frac{1}{\Gamma(\frac12(d-k))}\int_0^\infty g(r)r^{-1+\frac{d-k}2}e^{-r}dr\right) v_j(K).\notag
\end{eqnarray}
By linearity and density, (\ref{distanceintegral}) is actually valid for any integrable and continuous function $g:\R_+\to\R$
(see also \cite[Corollary 2.5]{lotz2020concentration} for a different proof).

Now, let us introduce $X_K:\Omega\to\R^d$ with the log-concave density
\begin{equation}\label{muK}
x\in\R^d\mapsto \frac{1}{W(K)}e^{-\pi\,{\rm dist}^2(x,K)},
\end{equation}
that we name {\it Hadwiger-Wills density associated to} $K$, in honor of the influential papers \cite{Had75} and \cite{W}.
Finally, let us define $H_K:\Omega\to\R_+$ as
$$H_K(\omega):=\pi\,{\rm dist}^2(X_K(\omega),K).$$ 
Identity (\ref{distanceintegral}) has the following probabilistic interpretation:
\begin{equation}\label{idinlaw}
H_K\overset{\rm law}{=} \sum_{j=1}^{d-V_K}\gamma_j,
\end{equation}
where the $\gamma_j$ are independent $\Gamma(\frac12,1)$ random variables, supposed to be also independent of $V_K$.

Given the identity in law (\ref{idinlaw}) (from which it comes that any probabilistic statement for $H_K$ transfers to $V_K$, and vice versa), we chose to focus on obtaining a limit theorem for $H_K$ instead of
$V_K$, the first seeming more amenable to analysis.

As a last remark, we note that $H_K$ is nothing but the (shifted) {\it information content} (or {\it Shannon entropy}) of $X_K$,
a property that  we will not use here, but which was crucial in Lotz {\it et al} \cite{lotz2020concentration} to prove that $H_K$ (and then $V_K$) displays a form of concentration. 

\subsection{Our main result}
If $F,G:\Omega\to\R$ are two random variables, recall that the total variation distance between $F$ and $G$ is defined as 
$$d_{TV}(F,G)=\sup_{A\in\mathcal{B}(\mathbb{R})}|\mathbb{P}(G\in A)-\mathbb{P}(F\in A)|.$$
The goal of this paper is to prove the following statement.
\begin{theorem}\label{main}
When $K$ denotes a convex body of $\R^d$, we let $X_K:\Omega\to\R^d$ have the density (\ref{muK}),
and we define $F_K:\Omega\to\R$ as
$$F_K=\frac{
{\rm dist}^2(X_K,K)
-\mathbb{E}[{\rm dist}^2(X_K,K)]
}
{\sqrt{
\Var\big(
{\rm dist}^2(X_K,K)
\big)
}}.$$
Also, we let $N\sim\mathcal{N}(0,1)$ to be a standard Gaussian random variable.

For all $\alpha\geq 0$ and $c>0$, we have
\begin{equation}\label{main1}
\sup_{d\geq 1} \,\,\sup_{K:\,{\rm diam}(K)\leq 2c\,d^\alpha} d^{\frac12-\alpha}d_{TV}(F_K,N)<\infty,
\end{equation}
where the second supremum runs over all convex bodies $K$ of $\R^d$ whose diameter is bounded by $2c\,d^\alpha$.

In particular, the following central limit theorem takes place: if $\alpha\in[0,\frac12)$,
a sequence $(d_n)$ of natural numbers diverging to infinity and a sequence $(K_n)$ of convex bodies
satisfying $K_n\subset \R^{d_n}$ and ${\rm diam}(K_n)=O(d_n^\alpha)$ are given,  then
\begin{equation}\label{(5)}
F_{K_n}\overset{\rm law}{\to} \mathcal{N}(0,1)\quad\mbox{as $n\to\infty$}.
\end{equation}
\end{theorem}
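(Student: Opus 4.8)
The plan is to apply Stein's method for normal approximation to the random variable $F_K$, exploiting the fact that $X_K$ has a log-concave density of the explicit form $\mu_K(x) = W(K)^{-1} e^{-\pi\,\mathrm{dist}^2(x,K)}$. Set $\varphi_K(x) = \pi\,\mathrm{dist}^2(x,K)$, so that $H_K = \varphi_K(X_K)$ and $\mu_K \propto e^{-\varphi_K}$. The first step is to compute the relevant derivatives of $\varphi_K$: since $\mathrm{dist}(\cdot,K)$ has gradient $(x-\Pi_K(x))/\|x-\Pi_K(x)\|$ outside $K$, one gets $\nabla\varphi_K(x) = 2\pi(x-\Pi_K(x))$, hence $\|\nabla\varphi_K(x)\|^2 = 4\pi^2\,\mathrm{dist}^2(x,K) = 4\pi\,\varphi_K(x)$, an identity that will drive all subsequent computations. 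One also checks that $\Pi_K$ is $1$-Lipschitz, so the Hessian of $\varphi_K$ (where it exists) is bounded between $0$ and $2\pi\,\mathrm{Id}$, in particular $\Delta\varphi_K \le 2\pi d$; equivalently $\varphi_K$ is a convex function and $\mu_K$ is strongly log-concave only in a degenerate sense — the key usable fact is that the Hessian of $\varphi_K$ is at most $2\pi\,\mathrm{Id}$.

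\textbf{Stein structure via integration by parts.}
For a $C^1$ test function $h$, integration by parts against $\mu_K$ gives $\mathbb{E}[\nabla h(X_K)] = \mathbb{E}[h(X_K)\nabla\varphi_K(X_K)]$. Applying this idea to build the Stein pair: let $f = f_z$ solve the Stein equation $f'(x) - x f(x) = \mathbf{1}_{\{x\le z\}} - \Phi(z)$ for the Kolmogorov/total-variation distance, with the classical bounds $\|f\|_\infty \le \sqrt{2\pi}/4$, $\|f'\|_\infty \le 1$. Writing $H_K = \varphi_K(X_K)$, $m = \mathbb{E} H_K$, $\sigma^2 = \mathrm{Var}(H_K)$, and $F_K = (H_K - m)/\sigma$, the standard route is to estimate $\mathbb{E}[f'(F_K) - F_K f(F_K)]$. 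The term $\mathbb{E}[F_K f(F_K)] = \sigma^{-1}\mathbb{E}[(\varphi_K(X_K)-m)f(F_K)]$ is handled by a covariance identity: using $\nabla\varphi_K$ and integration by parts, $\mathbb{E}[(\varphi_K(X_K)-m)\,g(X_K)]$ can be rewritten, via the fundamental theorem of calculus along straight lines together with the Brascamp–Lieb / Hörmander $L^2$ representation of the covariance for log-concave measures, as $\mathbb{E}\big[\langle \nabla\varphi_K(X_K), (\nabla^2\varphi_K(X_K))^{-1}\cdots\rangle\big]$-type expression — more concretely, one uses the inequality $\mathrm{Cov}(u(X_K),v(X_K)) \le \mathbb{E}[\langle \nabla u, (\nabla^2\varphi_K)^{-1}\nabla v\rangle]$ valid under Brascamp–Lieb. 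Taking $u = \varphi_K$ and $v = f\circ(\sigma^{-1}(\varphi_K - m))$ and using the chain rule, the factor $(\nabla^2\varphi_K)^{-1}$ acting on $\nabla\varphi_K$ together with $\nabla^2\varphi_K \preceq 2\pi\,\mathrm{Id}$ produces a lower bound $(\nabla^2\varphi_K)^{-1} \succeq (2\pi)^{-1}\mathrm{Id}$, and the quantity $\|\nabla\varphi_K\|^2 = 4\pi\varphi_K$ reappears. Carefully organizing these gives a bound of the form $d_{TV}(F_K,N) \le \frac{C}{\sigma^2}\,\mathbb{E}\big[\|\nabla\varphi_K(X_K)\|^2 \cdot (\text{bounded})\big] + (\text{similar})$, which after using $\|\nabla\varphi_K\|^2 = 4\pi H_K$ and $\mathbb{E} H_K, \mathrm{Var}(H_K) < \infty$ reduces everything to two scalar quantities: $\mathbb{E}[H_K]$ and $\sigma^2 = \mathrm{Var}(H_K)$.

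\textbf{Quantitative control of the mean and variance.}
The decisive estimate is therefore a lower bound on $\sigma^2 = \mathrm{Var}(H_K)$ and an upper bound on $\mathbb{E}[H_K]$ (or on a suitable higher moment), uniformly over convex bodies with $\mathrm{diam}(K) \le 2c\,d^\alpha$. Here I would use the identity in law \eqref{idinlaw}: $H_K \overset{\mathrm{law}}{=} \sum_{j=1}^{d-V_K}\gamma_j$ with $\gamma_j$ i.i.d.\ $\Gamma(\tfrac12,1)$, independent of $V_K$. Conditioning on $V_K$ and using that $\sum_{j=1}^{n}\gamma_j \sim \Gamma(n/2,1)$ has mean $n/2$ and variance $n/2$, the law of total variance yields $\mathrm{Var}(H_K) = \tfrac12\mathbb{E}[d - V_K] + \tfrac14\mathrm{Var}(V_K) \ge \tfrac12\mathbb{E}[d-V_K]$, and $\mathbb{E}[H_K] = \tfrac12\mathbb{E}[d-V_K]$. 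Thus both the mean and a lower bound for the variance are governed by $\mathbb{E}[d - V_K] = d - \sum_k k\,v_k(K)/W(K)$. The ultra-log-concavity \eqref{ulc} of $\{v_k(K)\}$ forces $V_K$ to be highly concentrated and, crucially, lets one compare $V_K$ stochastically to a Poisson or binomial variable; combined with the monotonicity $v_k(K) \le v_k(c\,d^\alpha\,\mathbb{B}^d)$ (valid once $K$ is contained in a ball of radius $c\,d^\alpha$, which holds after translation since $\mathrm{diam}(K) \le 2c\,d^\alpha$) and the explicit formula $v_k(\mathbb{B}^d) = \binom{d}{k}\kappa_d/\kappa_{d-k}$ with the asymptotics \eqref{further}, one estimates $\mathbb{E}[d - V_K]$ from below by a quantity of order $d^{1-2\alpha}$ up to constants — at least this is the target; the point is that shrinking $K$ (small diameter) pushes mass of $V_K$ toward small $k$, hence $d - V_K$ large. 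Plugging $\sigma^2 \gtrsim d^{1-2\alpha}$ and $\mathbb{E}[H_K] \lesssim d$ into the Stein bound $d_{TV}(F_K,N) \lesssim \mathbb{E}[H_K]/\sigma^2 + \ldots \lesssim d/d^{1-2\alpha} \cdot d^{-1} = d^{-(1-2\alpha)}$ — wait, more carefully the bound should come out as $\lesssim \sigma^{-1} \lesssim d^{-(1/2-\alpha)}$ — gives exactly \eqref{main1}, and \eqref{(5)} follows by letting $d\to\infty$ when $\alpha < \tfrac12$.

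\textbf{Main obstacle.}
The hardest part is not the Stein machinery (which is by now standard for log-concave densities via Brascamp–Lieb) but the \emph{uniform lower bound on $\mathrm{Var}(H_K)$}, equivalently on $\mathbb{E}[d - V_K]$, over all convex bodies of bounded diameter. A priori $K$ could be very ``flat'' (low-dimensional-looking) or very round, and one must show that in all cases enough of the mass of $V_K$ sits away from $k = d$. The tools I expect to need are: (i) the monotonicity of intrinsic volumes to reduce to $K$ a ball of radius $\sim c\,d^\alpha$ for the \emph{upper} bound on $v_k(K)$, hence a \emph{lower} bound on $d - V_K$; (ii) ultra-log-concavity \eqref{ulc} to prevent $V_K$ from degenerating; and (iii) careful Stirling-type asymptotics of $\binom{d}{k}\kappa_d/\kappa_{d-k}$ to locate the mode of the ball's intrinsic-volume sequence (which sits around $k \approx d - \Theta(d^{1-2\alpha})$ when the radius is $\Theta(d^\alpha)$, matching the claimed scaling). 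Getting clean, fully uniform constants out of (iii) — and checking that the lower-dimensional degenerate cases ($\dim K < d$, where $v_d(K) = 0$ automatically, which actually helps) do not cause trouble — is where the real work lies.
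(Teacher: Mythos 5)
Your high-level plan -- Stein's method for the density $e^{-\phi}$ combined with the Brascamp--Lieb inequality -- is indeed the strategy of the paper, and you correctly isolate the useful algebraic identities ($\nabla\phi = 2\pi(x-\Pi_K(x))$, $\|\nabla\phi\|^2 = 4\pi\,\phi$, $\nabla\Pi_K$ a contraction, $\nabla\Pi_K(x)(x-\Pi_K(x))=0$). But the quantitative core of your argument is off, and you misidentify where the real work lies.

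\textbf{The scaling of $\sigma^2$ and $\mathbb{E}[d-V_K]$ is wrong.} You claim $\mathbb{E}[d-V_K]\gtrsim d^{1-2\alpha}$ and hence $\sigma^2\gtrsim d^{1-2\alpha}$, arguing that the mode of $V_K$ for $K\subset c\,d^\alpha\mathbb B^d$ sits near $k\approx d-\Theta(d^{1-2\alpha})$. This is backwards. Ultra log-concavity and monotonicity give $\mathbb{E}[V_K]\le v_1(K)\le v_1(c\,d^\alpha\mathbb B^d)=O(d^{\frac12+\alpha})$ (this is Corollary~\ref{BoundIntr} plus \eqref{further}), which for $\alpha<\tfrac12$ is $o(d)$. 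Hence $\mathbb{E}[d-V_K]\sim d$ and, by the law of total variance (Lemma~\ref{relationsVarEsp}), $\sigma^2\sim d/2$ -- of order $d$, \emph{uniformly}, not $d^{1-2\alpha}$. So $\sigma^{-1}\asymp d^{-1/2}$; the $d^{\alpha}$ in the final rate cannot come from the variance denominator, as you propose.

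\textbf{You miss the term that actually produces $d^\alpha$.} In the Gibbs--Stein bound (Proposition~\ref{goodlife}), there are \emph{two} error terms: (A) a covariance/Gamma-like term, and (B) a term driven by the fluctuations of the Laplacian $\Delta\phi(X_K)=2\pi(d-\mathrm{Tr}\,\nabla\Pi_K(X_K))$, i.e.\ by the non-constancy of $\mathrm{Hess}\,\phi$. Your sketch only addresses something like (A), and even there your back-of-envelope bound $\mathbb{E}[H_K]/\sigma^2\asymp 1$ is not small; the correct form is $A\lesssim\sqrt{\mathbb{E}[H_K]}/\sigma^2\le 1/\sigma\asymp d^{-1/2}$. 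The $d^{\alpha-1/2}$ rate comes entirely from (B), where one must control $\mathrm{Var}\big(\mathbb{E}[\mathrm{Tr}\,\nabla\Pi_K(X_K)\mid\mathrm{dist}(X_K,K)]\big)$. The paper does this by (i) identifying this conditional expectation, for polytopes, with $e_p(1/\mathrm{dist}(X_K,K))$ where $p(r)$ is the mixture law over face-dimensions on the parallel surface $S_r$ (Proposition~\ref{polytopesProp}(1),(5)); (ii) proving $p(r)$ is ultra log-concave (Lemma~\ref{AnULClaw}) so that $\mathrm{var}_p(r)\le p_1(r)/p_0(r)=O(d^{1+\alpha})r$; and (iii) using Brascamp--Lieb (after an $\epsilon$-regularization because $\mathrm{Hess}\,\phi$ degenerates) together with a large-deviation bound $\mathbb{P}(\mathrm{dist}(X_K,K)\le\sqrt{d}/\theta)=o(e^{-d/2})$ (Lemma~\ref{LBoundDist}) to conclude the variance is $O(d^{2\alpha})$. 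None of this machinery appears in your sketch, and it is where the difficulty really is.

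\textbf{Consequently, your stated ``main obstacle'' is misplaced.} You single out the uniform lower bound on $\mathrm{Var}(H_K)$ as the hard part; in fact $\mathrm{Var}(H_K)\ge\mathbb{E}[H_K]=(d-\mathbb{E}[V_K])/2\sim d/2$ drops out almost for free from the identity in law \eqref{idinlaw} and Corollary~\ref{BoundIntr}. The real obstacle is the Hessian-fluctuation term (B), which needs the surface decomposition and ultra log-concavity of the mixture $p(r)$. (Minor points: the test-function bounds you quote, $\|f\|_\infty\le\sqrt{2\pi}/4$, $\|f'\|_\infty\le 1$, are the Kolmogorov-distance bounds; for total variation the paper correctly uses $\|g\|_\infty\le\sqrt{\pi/2}$, $\|g'\|_\infty\le 2$. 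Also, $(\nabla^2\phi)^{-1}\succeq(2\pi)^{-1}I$ as you write is not usable directly, since $\nabla^2\phi$ can be singular; the paper regularizes with $\phi_\epsilon=\phi+\epsilon\pi\|\cdot\|^2$ and then lets $\epsilon\to0$.)
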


We note that Theorem \ref{main} parallels recent fluctuation results proved in the context of {\it conic} intrinsic volumes, see
\cite{goldstein2017gaussian} and more precisely Theorems 1.1, 2.1 and 3.1 therein.
However, and unlike \cite{goldstein2017gaussian}, it is unfortunately not possible to deduce from (\ref{(5)}) that
a CLT also holds for $(V_{K_n})$, which would have positively answered our original question, see the beginning of Section \ref{rv}. Investigating whether and when such a CLT takes place
or not will be the object of another work.

\subsection{Sketch of the proof of (\ref{(5)})}
Of course, (\ref{(5)}) is as an immediate consequence of (\ref{main1}). 
However, we would like to describe here another, more direct, approach, which we find interesting
for itself.

We can deduce (see Section \ref{sec31} for the details) from (\ref{idinlaw}) and the usual central limit theorem that 
$$
\frac{{\rm dist}^2(X_K,K)-\E[{\rm dist}^2(X_K,K)]}{\sqrt{\Var({\rm dist}^2(X_K,K))}}\approx \mathcal{N}(0,1)
$$as soon as
\begin{equation}\label{(9)}
\frac{\Var(V_K)}{d-\mathbb{E}[V_K]}=o(1).
\end{equation}
This is in order to check  (\ref{(9)}) that we need to assume  that the diameter of $K$ is $O(d^\alpha)$ with $\alpha<\frac12$.
Indeed this assumption allows us to write 
\begin{equation}\label{a}
v_1(K) \leq v_1(O(d^{\alpha})\mathbb{B}^d) = O(d^\alpha) \,v_1(\mathbb{B}^d) = O(d^{\alpha+\frac12}),
\end{equation}
where the last equality comes from (\ref{further}).
Now, exploiting the ultra log-concavity  (\ref{ulc}) of $(v_k(K))$, we can also prove 
\begin{equation}\label{b}
\mathbb{E}[V_K]\leq v_1(K)\quad\mbox{and}\quad
\Var(V_K)\leq v_1(K),
\end{equation} 
see Corollary \ref{BoundIntr}. Putting (\ref{a}) and (\ref{b}) together, the desired conclusion (\ref{(9)}) follows now immediatly,
implying in turn (\ref{(5)}).

\subsection{Sketch of the proof of (\ref{main1})}
First, we apply Stein's method to prove the following (general) statement on Gibbs distribution.

Let $X:\Omega\to\R^d$ have a density of the form $e^{-\phi}$ with $\phi:\R^d\to\R$ regular enough.
Consider an independent copy $\widehat{X}$ of $X$, and set
\begin{eqnarray*}
Y&=&\nabla \phi(X),\quad \widehat{Y}=\nabla \phi(\widehat{X})\\
Y_t&=&e^{-t}Y+\sqrt{1-e^{-2t}}\widehat{Y},\quad t\in[0,\infty].
\end{eqnarray*}
Finally, let $H:\R^d\to\R$ be smooth, and set
$$
F=\frac{H(Y)-\mathbb{E}[H(Y)]}{\sqrt{\Var(H(Y))}}.
$$
Write `${\rm Hess}$' to denote the Hessian operator.
Then $d_{TV}(F,\mathcal{N}(0,1))\leq A+B$,
where
\begin{eqnarray*}
A&=&\frac{\rm 2}{\sigma^2} \sqrt{\Var\left(
\int_0^\infty e^{-t}\big\langle 
({\rm Hess} \phi)(X) \nabla H(Y), \widehat{\mathbb{E}}[\nabla H(Y_t)]
\big\rangle dt
\right)}\\
B&=&\sup_{g}\big|\mathbf{E}[C(g)]\big|,
\end{eqnarray*}
with $$C(g)=\frac{1}{\sigma}\int_0^{\infty}e^{-2t}(g(F)-F){\rm Tr}\left[\big(({\rm Hess}\phi)(X)-({\rm Hess}\phi)(X_{\infty})\big)({\rm Hess}H)(Y_t)\right]dt$$
and where the supremum runs over the set of all functions $g\in\mathcal C^1$ such that $|g(x)|\leq \sqrt{\frac{\pi}{2}}$ and $|g'(x)|\leq 2$ for all $x\in\R$.

In our case, we have $$\phi(x)=\pi\,{\rm dist}^2(x,K)+\log(W(K))$$ (implying $\nabla \phi(x)=2\pi(x-\Pi_K(x))$) and
$H(x)=\|x\|^2$, which gives that
$d_{TV}\left(F_K,\mathcal{N}(0,1)\right)$ is less than a universal constant times
$$
\frac{1}{\sigma^2} \sqrt{\Var\left(
\int_0^\infty e^{-t}\big\langle 
Y, \widehat{\mathbb{E}}[\nabla H(Y_t)]
\big\rangle dt
\right)}+\frac{1}{\sigma} 
\sqrt{\Var
\Big({\rm Tr}\big(
({\rm Hess} \phi)(X)
\big)\Big)
}.
$$
To get (\ref{main1}), it remains to bound these two variances.
For this, we mainly rely on the Brascamp-Lieb inequality.

\subsection{Organisation of the paper}
The rest of the paper is organised as follows.  
Section \ref{Prlms}  contains a few preliminaries needed for the proof of Theorem \ref{main}. In Section \ref{further} we make additional remarks outlining potential improvements and continuations of this work. Section \ref{Stein} contains our results regarding Stein's method for Gibbs distributions, which might be of independent interest.
The proof of Theorem \ref{main} is finally done in Section \ref{mainProof}.

\section{Preliminaries}\label{Prlms}
We gather in this section a few preliminaries regarding continuous log-concave and discrete ultra log-concave probability measures, as well as intrinsic volumes of a convex body. These facts will be used in the proof of 
Theorem \ref{main}.

\subsection{Continuous log-concave probability measures}
\begin{definition}
Let $\mu$ be a continuous probability law on $\R^d$ with a density of the form $p=e^{-\theta}$ with $\theta:\R^d\to\R$.
\begin{enumerate}
\item If $\theta$ is convex, we say that $\mu$ is \textit{log-concave}.
\item Fix $k>0$. If $\theta$ is $C^2$ and satisfies
$\langle ({\rm Hess}\theta)(x)u,u\rangle\geq k\|u\|^2$ for all $x,u\in\R^d$, we say that $\mu$ is 
\textit{$k$-strongly log-concave}.
\end{enumerate}
\end{definition}

Strongly log-concave probability measure satisfy the celebrated Brascamp-Lieb inequality
\cite{Brascamp2002extensions}.
\begin{proposition}\label{BrascampLieb}
Fix $k>0$ and let $\mu$ be a continuous $k$-strongly log-concave probability measure 
on $\R^d$ with density $p$.
Assume that $X:\Omega\to\R$ is distributed according to $\mu$.
Then, for any differentiable function $f:\mathbb{R}^d\to \mathbb{R}$ such that $\Var(f(X))<\infty$,
\begin{eqnarray}
\Var(f(X))&\leq& \mathbb{E}\left[(\nabla f(X))^T({\rm Hess}(-\log p)(X))^{-1}\nabla f(X)\right].
\label{BC}
\end{eqnarray}
\end{proposition}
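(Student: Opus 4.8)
The plan is to run the classical Hörmander $L^2$ (``$\Gamma_2$'') argument behind the Brascamp--Lieb inequality, specialised to the $k$-strongly log-concave case, where all the analytic points become transparent. Write $\theta=-\log p$, so that $\mu(\mathrm dx)=e^{-\theta(x)}\mathrm dx$ and $\Sigma(x):=({\rm Hess}\,\theta)(x)\geq kI_d>0$ for every $x$; by a standard truncation and mollification reduction it is enough to prove \eqref{BC} for $f$ smooth and compactly supported with $\mathbb E[f(X)]=0$. I would first introduce the generator $L=\Delta-\langle\nabla\theta,\nabla\,\cdot\,\rangle$ of the Langevin diffusion reversible for $\mu$, for which $\int (Lv)\,w\,\mathrm d\mu=-\int\langle\nabla v,\nabla w\rangle\,\mathrm d\mu$, and solve the Poisson equation $Lu=f$: since $\theta$ is uniformly convex, $-L$ has a strictly positive spectral gap, hence is boundedly invertible on the mean-zero subspace of $L^2(\mu)$, and elliptic regularity together with the Gaussian-type decay forced by strong convexity of $\theta$ provide the regularity and integrability of $u,\nabla u,{\rm Hess}\,u$ needed below.

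The computation itself is then short. One integration by parts gives $\Var(f(X))=\int f^2\,\mathrm d\mu=\int f\,(Lu)\,\mathrm d\mu=-\int\langle\nabla f,\nabla u\rangle\,\mathrm d\mu$. Integrating by parts twice yields the Bochner identity $\int(Lu)^2\,\mathrm d\mu=\int\|{\rm Hess}\,u\|_{\rm HS}^2\,\mathrm d\mu+\int\langle\Sigma\,\nabla u,\nabla u\rangle\,\mathrm d\mu$, and in particular $\int\langle\Sigma\,\nabla u,\nabla u\rangle\,\mathrm d\mu\leq\int(Lu)^2\,\mathrm d\mu=\Var(f(X))$. I would then apply the pointwise Cauchy--Schwarz inequality $|\langle a,b\rangle|\leq\langle\Sigma^{-1}a,a\rangle^{1/2}\langle\Sigma b,b\rangle^{1/2}$ (valid since $\Sigma>0$) with $a=\nabla f$, $b=\nabla u$, followed by Cauchy--Schwarz in $L^2(\mu)$, to get $\Var(f(X))\leq\big(\int\langle\Sigma^{-1}\nabla f,\nabla f\rangle\,\mathrm d\mu\big)^{1/2}\big(\int\langle\Sigma\,\nabla u,\nabla u\rangle\,\mathrm d\mu\big)^{1/2}\leq\big(\int\langle\Sigma^{-1}\nabla f,\nabla f\rangle\,\mathrm d\mu\big)^{1/2}\,\Var(f(X))^{1/2}$; dividing by $\Var(f(X))^{1/2}$ (the claim being trivial when it vanishes) and squaring is exactly \eqref{BC}, since $\Sigma^{-1}=({\rm Hess}(-\log p))^{-1}$.

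The main obstacle is not this algebra but the functional-analytic bookkeeping of the first step: producing a solution $u$ of $Lu=f$ in a class for which the two integrations by parts generate no boundary contribution at infinity and all the integrals above are finite. This is precisely where $k$-strong log-concavity pays off, since it simultaneously supplies the spectral gap that inverts $L$ and the tail control that legitimises the integrations by parts; to be fully careful one can first establish the inequality for a regular potential (smooth, $\Sigma$ bounded above, quadratic growth) and then pass to a general $k$-strongly log-concave $\theta$ by a monotone approximation. An alternative, somewhat more delicate, route would be the semigroup proof, based on $\Var(f(X))=2\int_0^\infty\int|\nabla P_tf|^2\,\mathrm d\mu\,\mathrm dt$ for $P_t=e^{tL}$ together with the matrix-valued commutation estimate for $\nabla P_t$ implied by ${\rm Hess}\,\theta\geq kI_d$; in any case the statement is due to Brascamp and Lieb \cite{Brascamp2002extensions}, whose original argument instead goes through the Prékopa--Leindler inequality.
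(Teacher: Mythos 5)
The paper does not actually prove this proposition: it is stated as the ``celebrated Brascamp--Lieb inequality'' and attributed directly to \cite{Brascamp2002extensions}, with no proof given. Your proposal therefore cannot be ``the same as the paper's proof''; it supplies one. The argument you run is the classical H\"ormander $L^2$ (dual/Bochner) proof: solve $Lu=f$ with $L=\Delta-\langle\nabla\theta,\nabla\cdot\rangle$, use $\Var(f)=-\int\langle\nabla f,\nabla u\rangle\,\mathrm d\mu$, invoke the integrated Bochner identity $\int(Lu)^2\,\mathrm d\mu=\int\|{\rm Hess}\,u\|_{\rm HS}^2\,\mathrm d\mu+\int\langle\Sigma\nabla u,\nabla u\rangle\,\mathrm d\mu$ to bound $\int\langle\Sigma\nabla u,\nabla u\rangle\,\mathrm d\mu\le\Var(f)$, and close by the $\Sigma$-weighted Cauchy--Schwarz inequality. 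This is correct, and it is a genuinely different route from the one Brascamp and Lieb took (which goes through Pr\'ekopa--Leindler and a functional form of Brunn--Minkowski). The H\"ormander route has the advantage of immediately exhibiting the Hessian weight $\Sigma^{-1}$ and of localising cleanly under the uniform convexity assumption ${\rm Hess}\,\theta\ge kI_d$, which both gives the spectral gap that inverts $L$ on mean-zero functions and the Gaussian tail control that justifies the two integrations by parts; the Pr\'ekopa--Leindler route, by contrast, yields the inequality for merely log-concave (not necessarily $C^2$ or uniformly convex) potentials with no analytic bookkeeping.

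Two minor points worth tightening if you were to write this out in full. First, when you reduce to smooth compactly supported $f$ with $\mathbb E[f(X)]=0$, you should also dispose of the case where the right-hand side of \eqref{BC} is infinite (then there is nothing to prove) before running the approximation, and check that the passage to general differentiable $f$ with $\Var(f(X))<\infty$ is monotone in the right direction on both sides. Second, the ``monotone approximation'' of a general $k$-strongly log-concave $\theta$ by regular potentials deserves a sentence: the natural move is to replace $\theta$ by its infimal convolution with $\tfrac{1}{2\epsilon}\|\cdot\|^2$ (Moreau--Yosida), which preserves $k$-strong convexity for $\epsilon$ small, has bounded Hessian, and converges pointwise, so that both sides of \eqref{BC} pass to the limit by dominated convergence; the paper's standing assumption $\theta\in C^2$ makes even this unnecessary for the application at hand, since there one first perturbs to the strongly log-concave $\phi_\epsilon(x)=\pi\,{\rm dist}^2(x,K)+\epsilon\pi\|x\|^2$ and applies \eqref{BC} to that.
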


\subsection{Discrete ultra log-concave probability measures}

\begin{definition}
Let $d\in\mathbb{N}$ and let $(x_k)_{0\leq k\leq d}$ be a
sequence of nonnegative numbers.
We say that $(x_k)$ is log-concave if 
\begin{equation}\label{LC}
x_k^2\geq x_{k-1}x_{k+1}\quad\mbox{for all $k\in \{1,\ldots,d-1\}$}.
\end{equation}
We say that $(x_k)$ is ultra log-concave if 
\begin{equation}\label{ULC}
kx_k^2\geq (k+1)x_{k-1}x_{k+1} \quad\mbox{for all $k\in \{1,\ldots,d-1\}$}.
\end{equation}
\end{definition}

It is immediate to check that $(x_k)$ is ultra log-concave if and only if $(k!x_k)$ is log-concave.
Discrete ultra log-concave probability measures satisfy a Poincar\'e-type variance bound.

\begin{proposition}\label{ULCVar}
Let $(x_k)_{0\leq k\leq d}$ be an ultra log-concave distribution, that is,
an ultra log-concave sequence of nonnegative numbers summing up to 1.
Let $V$ be a random variable on $\{0,\ldots,d\}$ distributed according to $x$, that is, $\mathbb{P}(X=k)=x_k$, $0\leq k\leq d$.
Then
\begin{enumerate}
\item[(a)] $\mathbb{E}[V]\leq \frac{x_1}{x_0}$;
\item[(b)] $\Var(V)\leq \frac{x_1}{x_0}$.
\end{enumerate}
\end{proposition}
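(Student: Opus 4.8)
The plan is to reduce everything to the single structural fact that $(x_k)$ ultra log-concave means $(k!\,x_k)$ is log-concave, and then exploit the two classical consequences of log-concavity of a probability sequence: (i) the ratios $x_{k+1}/x_k$ are non-increasing in $k$ (wherever defined), and (ii) a Chebyshev-type correlation inequality for monotone functions against a log-concave law. For part (a), I would write $\mathbb{E}[V]=\sum_{k\ge 1}k\,x_k=\sum_{k\ge 1}k\cdot\frac{k!\,x_k}{k!}$ and instead argue more cleanly as follows. Introduce $y_k=k!\,x_k$, so $(y_k)$ is a log-concave sequence (not normalized). Log-concavity gives $y_k/y_{k-1}\ge y_{k+1}/y_k$ for all $k$, i.e. the sequence of successive ratios $r_k:=y_k/y_{k-1}=k\,x_k/x_{k-1}$ is non-increasing. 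Hence $k\,x_k/x_{k-1}\le 1\cdot x_1/x_0=x_1/x_0$ for every $k\ge 1$, which already yields $k\,x_k\le (x_1/x_0)\,x_{k-1}$. Summing over $k\ge 1$ gives $\mathbb{E}[V]=\sum_{k\ge1}k\,x_k\le \frac{x_1}{x_0}\sum_{k\ge1}x_{k-1}\le \frac{x_1}{x_0}$, since $\sum_k x_k=1$. This proves (a).

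For part (b), the cleanest route is to compare $V$ with a Poisson (or binomial) variable having the same structural constraint, but I would rather proceed by an elementary two-point / summation-by-parts argument. Set $\lambda:=x_1/x_0$. From (a) we have $\mathbb{E}[V]\le\lambda$, so it suffices to bound $\mathbb{E}[V(V-1)]$ and assemble $\Var(V)=\mathbb{E}[V(V-1)]+\mathbb{E}[V]-\mathbb{E}[V]^2$. Using the ratio bound $k\,x_k\le r_{k-1}r_{k-2}\cdots$ is awkward; instead note that ultra log-concavity applied twice, or equivalently log-concavity of $(y_k)$, gives $k(k-1)\,x_k=y_k/( (k-2)!)\le \cdots$. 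The cleanest identity: from $k\,x_k\le\lambda\,x_{k-1}$ for all $k\ge1$ (established above), iterate to get $k(k-1)x_k\le\lambda\,(k-1)x_{k-1}\le\lambda^2 x_{k-2}$ for $k\ge 2$; summing gives $\mathbb{E}[V(V-1)]\le\lambda^2$. Therefore $\Var(V)=\mathbb{E}[V(V-1)]+\mathbb{E}[V]-\mathbb{E}[V]^2\le\lambda^2+\lambda-\mathbb{E}[V]^2$. This is not yet $\le\lambda$; to close the gap I would use the sharper fact that equality propagates through the ratio bound only for the Poisson law, and for the Poisson$(\lambda)$ law both sides equal $\lambda$. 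Concretely, I would show $\mathbb{E}[V(V-1)]\le\mathbb{E}[V]^2$, i.e. a negative-correlation / log-concavity inequality saying $\sum_k k(k-1)x_k\cdot\sum_k x_k\le(\sum_k k\,x_k)^2$ — wait, that inequality goes the wrong way, so the right statement to prove is $\mathbb{E}[V(V-1)]\le(\mathbb{E}[V])^2$, which for log-concave $(k!x_k)$ follows from the FKG/Newton-inequality type bound $\big(\mathbb{E}[\binom{V}{2}]\big)\le \frac12(\mathbb{E}[V])^2$; plugging in gives $\Var(V)\le\mathbb{E}[V]\le\lambda$.

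I expect the main obstacle to be exactly this last step: upgrading the crude bound $\mathbb{E}[V(V-1)]\le\lambda^2$ to the sharp $\Var(V)\le\lambda$. The honest way is to invoke the known result (due to, among others, Johnson, and to Bobkov--Marsiglietti--Melbourne in the ultra log-concave setting) that ultra log-concave distributions are stochastically dominated, in the convex order, by the Poisson law with the same mean — or more simply, that among ultra log-concave sequences with a given value of $x_1/x_0$, the variance is maximized by the Poisson$(\lambda)$ law, for which $\Var=\lambda$. Alternatively, a self-contained summation-by-parts argument comparing $(x_k)$ to $(e^{-\lambda}\lambda^k/k!)$ term by term, using that $k\,x_k/x_{k-1}$ is non-increasing and equals $\lambda$ at $k=1$, should give both (a) and (b) simultaneously; I would present that version if a clean two-line derivation is available, and otherwise cite the relevant ultra-log-concavity literature.
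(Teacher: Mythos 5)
Your proof of part (a) is correct and is a genuinely different, and arguably cleaner, route than the paper's. You work directly from the reformulation ``$(k!x_k)$ is log-concave $\Leftrightarrow$ $r_k := k x_k/x_{k-1}$ is non-increasing,'' deduce $k x_k \le (x_1/x_0)\,x_{k-1}$ for every $k\ge 1$, and sum. This is entirely self-contained. The paper instead observes that ultra log-concavity implies the inequality $\tfrac{x_k}{x_{k+1}} - \tfrac{x_{k-1}}{x_k} \ge \tfrac{x_0}{x_1}$, i.e.\ Johnson's ``$c$-log-concavity'' with $c = x_0/x_1$, and then cites \cite[Lemma 5.3]{johnson2017discrete} for (a). Your argument buys transparency; the paper's buys a unified framework that also delivers (b) from the same source.

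For part (b), however, there is a genuine gap, and you come close to diagnosing it yourself. The two direct routes you sketch do not close: iterating the ratio bound gives $\mathbb{E}[V(V-1)] \le \lambda^2$ (or, more sharply, $\le \lambda\,\mathbb{E}[V]$ if you sum once against $(k-1)x_{k-1}$), and neither yields $\Var(V)\le\lambda$ without further information, because the term $-\mathbb{E}[V]^2$ can be much smaller than $-\lambda^2$ when $\mathbb{E}[V]\ll\lambda$. The assertion that $\mathbb{E}[V(V-1)]\le\mathbb{E}[V]^2$ ``follows from a FKG/Newton-inequality type bound'' is unsubstantiated; no such elementary inequality is on the table, and the claim that among ultra log-concave laws with a fixed $x_1/x_0$ the Poisson maximizes variance is not a standard statement. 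What does work, and what you correctly suspect, is to invoke a genuine theorem: either (i) Johnson's variance bound for $c$-log-concave laws (\cite[Theorem 1.5]{johnson2017discrete}), which gives $\Var(V)\le 1/c = x_1/x_0$ directly once you have checked, as the paper does, that ULC $\Rightarrow$ $c$-log-concave with $c=x_0/x_1$; or (ii) the fact that ultra log-concave laws are dominated in convex order by the Poisson with the same mean, whence $\Var(V)\le\mathbb{E}[V]$, and then combine with your part (a). Either closes the argument, but you must commit to one and cite it rather than gesture at Newton-type inequalities, which is where your proposal currently falls short.
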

\begin{proof}
It is immediate to check that ultra log-concavity of $(x_k)$ is equivalent to the fact that the sequence
$\left(
\frac1{k+1}\,\frac{x_k}{x_{k+1}}
\right)$
is increasing. We deduce that
$$
\frac{x_k}{x_{k+1}} - \frac{x_{k-1}}{x_k}\geq \left(\frac{k+1}{k}-1\right)\frac{x_{k-1}}{x_k}
=\frac1k\,\frac{x_{k-1}}{x_k}\geq \frac{x_0}{x_1},
$$
where in the last inequality we used again that $\left(
\frac1{k+1}\,\frac{x_k}{x_{k+1}}
\right)$
is increasing.

This shows that $(x_k)$ is $c$-log-concave with $c=\frac{x_0}{x_1}$ in the sense of \cite{johnson2017discrete}.
We can then apply \cite[Lemma 5.3]{johnson2017discrete} to obtain (a) and
\cite[Theorem 1.5]{johnson2017discrete} to obtain (b). 
\end{proof}

\subsection{Elements of convex geometry}
In this section, $K\subset\mathbb{R}^d$ denotes a convex body, that is, a non empty, compact and convex set. It is well-known that for every point $x\in\mathbb{R}^d$ there is a unique point in $K$, denoted by $\Pi_K(x)$ and called the projection of $x$ onto $K$, such that ${\rm dist}(x,K)=\|x-\Pi_K(x)\|$. Moreover, the application $x\mapsto {\rm dist}(x,K)$ is continuous. We also have the following fact, proven e.g. in \cite{goldstein2017gaussian}.

\begin{lemma}
The gradient of the square distance to $K$ is given, for almost all $x\in\mathbb{R}^d$, by
$$\nabla {\rm dist}^2(x,K)=2(x-\Pi_K(x)).$$
We also have that
\begin{equation}\label{relation}
\nabla \Pi_K(x) (x-\Pi_K(x))=0
\end{equation}
for all $x\in\R^d$.
\end{lemma}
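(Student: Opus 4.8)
The plan is to derive both assertions from the single basic fact underlying projections onto a closed convex set: the point $p=\Pi_K(x)$ is characterized by $p\in K$ together with the variational inequality $\langle x-p,\,y-p\rangle\le 0$ for every $y\in K$, and consequently $x\mapsto\Pi_K(x)$ is $1$-Lipschitz (hence continuous, and differentiable at almost every $x$ by Rademacher's theorem).

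\emph{The gradient formula.} Fix $x\in\R^d$ and an increment $h\in\R^d$, and abbreviate $f={\rm dist}^2(\cdot,K)$. Since $\Pi_K(x)\in K$ is a competitor in the minimization defining $f(x+h)$ and $\Pi_K(x+h)\in K$ is a competitor in the one defining $f(x)$, bounding $f(x+h)\le\|x+h-\Pi_K(x)\|^2$ and $f(x)\le\|x-\Pi_K(x+h)\|^2$ and expanding the squares yields the elementary two-sided estimate
\[
2\langle h,\,x-\Pi_K(x+h)\rangle+\|h\|^2\;\le\;f(x+h)-f(x)\;\le\;2\langle h,\,x-\Pi_K(x)\rangle+\|h\|^2 .
\]
Using $\|\Pi_K(x+h)-\Pi_K(x)\|\le\|h\|$, the left-hand side equals $2\langle h,\,x-\Pi_K(x)\rangle+O(\|h\|^2)$, so the two bounds agree to first order and $f$ is differentiable (in fact everywhere, which is more than the statement asks) with $\nabla f(x)=2\big(x-\Pi_K(x)\big)$.

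\emph{The orthogonality relation.} Put $v=x-\Pi_K(x)$ and, for $s\ge-1$, look at the point $x+sv=\Pi_K(x)+(1+s)v$ on the ray issuing from $\Pi_K(x)$ through $x$. For every $y\in K$ one has $\langle (x+sv)-\Pi_K(x),\,y-\Pi_K(x)\rangle=(1+s)\langle v,\,y-\Pi_K(x)\rangle\le 0$, so the variational characterization gives $\Pi_K(x+sv)=\Pi_K(x)$: the projection is constant along that segment. Hence the directional derivative of $\Pi_K$ at $x$ in the direction $v$ is $0$; at a point where $\Pi_K$ is differentiable this directional derivative equals $\nabla\Pi_K(x)\,v$, which is precisely (\ref{relation}). (Alternatively one may differentiate $\nabla f=2({\rm Id}-\Pi_K)$ once more and use symmetry of the Hessian of the convex function $f$, but the argument above needs no second-order regularity.)

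\emph{Main difficulty.} There is no genuine obstacle here; the only delicate point is bookkeeping about regularity — in what sense $\nabla\Pi_K(x)$ is taken (a.e., via Rademacher) and the passage from ``$\Pi_K$ constant along a segment'' to the vanishing of $\nabla\Pi_K(x)\,v$. This is also why the gradient formula is stated for almost every $x$, whereas (\ref{relation}), being an identity about the directional derivative in the single distinguished direction $x-\Pi_K(x)$, is valid at every point where $\Pi_K$ is differentiable.
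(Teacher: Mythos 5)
Your proof is correct. The paper itself gives no argument for this lemma---it simply cites \cite{goldstein2017gaussian}---so there is no in-paper proof to compare against, but your route is the standard one. The sandwich estimate obtained by testing with the two competitors $\Pi_K(x)$ and $\Pi_K(x+h)$, combined with the $1$-Lipschitz bound $\|\Pi_K(x+h)-\Pi_K(x)\|\le\|h\|$, in fact shows that ${\rm dist}^2(\cdot,K)$ is differentiable at \emph{every} point with gradient $2\big(x-\Pi_K(x)\big)$, slightly more than the ``a.e.''\ in the statement. For \eqref{relation}, the key fact that $\Pi_K$ is constant along the ray $s\mapsto\Pi_K(x)+(1+s)\big(x-\Pi_K(x)\big)$, $s\ge -1$, follows cleanly from the variational inequality $\langle x-\Pi_K(x),\,y-\Pi_K(x)\rangle\le 0$ for $y\in K$, and it forces the directional derivative $\nabla\Pi_K(x)\big(x-\Pi_K(x)\big)$ to vanish wherever $\Pi_K$ is differentiable (a.e.\ by Rademacher, $\Pi_K$ being Lipschitz). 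Your closing remark is also the right reading of the lemma's phrase ``for all $x\in\R^d$'': it should be understood as holding at every point of differentiability of $\Pi_K$, since $\nabla\Pi_K(x)$ is otherwise undefined.
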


We now turn our attention to the intrinsic volumes, as defined in Section \ref{intrrr}. 
Let us state further facts relevant to polytopes, 
which will be useful in our proof of Theorem \ref{main}.
\begin{proposition}\label{polytopesProp}
Let $K\subset\mathbb{R}^d$ be a polytope. For all $r>0$, let $S_r$ denote the boundary of the outer parallel body $K+r\mathbb{B}^d$, endowed with the $(d-1)$-dimensional uniform measure. Then, the following facts hold:
\begin{enumerate}
\item Let $X_K:\Omega\to\R^d$ be a random vector distributed according to the Hadwiger-Wills density (\ref{muK}) associated with $K$. The law of $X_K$ conditional to ${\rm dist}(X_K,K)$ is the uniform law on the $(d-1)$-dimensional surface
$S_{{\rm dist}(X_K,K)}$.
\item If $x\in \overset{\circ}{K}$, ${\rm Tr}(\nabla\Pi_K(x))=d$.
\item Let $F$ be a $i$-dimensional face of $K$. Let $C_F\subset \mathbb{R}^d$ be the set of points $x\in\mathbb{R}^d$ such that $\Pi_K(x)\in \overset{\circ}{F}$, where $\overset{\circ}{F}$ is the relative interior of $F$. 
Then ${\rm Tr}(\nabla\Pi_K(x))=i$ for all $x\in C_F$.
\item For each face $F$, the boundary of $C_F$ has null Lebesgue measure. 
\item For $r>0$ and $\mathcal F_i$ the set of $i$-dimensional faces of K, the surface area of $\cup_{F\in\mathcal F_i}C_F\cap S_r$ is given by $${\rm Vol}_{d-1}(\cup_{F\in\mathcal F_i}C_F\cap S_r)=r^{d-i-1}v_{i}(K)(d-i)\kappa_{d-i}.$$
\end{enumerate}
\end{proposition}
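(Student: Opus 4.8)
The plan is to verify the five facts in Proposition~\ref{polytopesProp} by exploiting the polytopal structure of $K$, in particular the decomposition of $\R^d$ into the normal cones over the faces of $K$. First I would recall that for a polytope, $\R^d\setminus K$ decomposes (up to a Lebesgue-null set) as the disjoint union, over all proper faces $F$ of $K$, of the sets $C_F=\{x: \Pi_K(x)\in\overset{\circ}{F}\}$, and that on $C_F$ the projection $\Pi_K$ is the orthogonal projection onto the affine hull of $F$ followed by nothing else---so it is \emph{affine} on $C_F$. This single observation drives items (2), (3) and (4): on $C_F$, $\nabla\Pi_K(x)$ is the constant matrix equal to the orthogonal projection onto the linear subspace $\mathrm{lin}(F-F)$ parallel to $F$, whose trace is exactly $\dim F=i$; when $x\in\overset{\circ}{K}$ we may take $F=K$ itself and get trace $d$; and $\partial C_F$ is contained in a finite union of hyperplanes (the walls separating the normal cones), hence is Lebesgue-null.

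For item~(1), the claim is that $X_K$ conditioned on $\{{\rm dist}(X_K,K)=r\}$ is uniform on $S_r=\partial(K+r\mathbb{B}^d)$. I would argue this via the co-area formula: the density $\mu_K(x)=c\,e^{-\pi\,{\rm dist}^2(x,K)}$ depends on $x$ only through $\rho(x):={\rm dist}(x,K)$, so the conditional law on a level set $\{\rho=r\}$ is proportional to the $(d-1)$-dimensional Hausdorff measure on that level set divided by $\|\nabla\rho\|$. Since $\nabla\,{\rm dist}^2(x,K)=2(x-\Pi_K(x))$ has norm $2r$ on $\{\rho=r\}$, one gets $\|\nabla\rho(x)\|=1$ there (for $r>0$), so the conditional density is constant on $\{\rho=r\}=S_r$, i.e.\ uniform. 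One has to be slightly careful that $\rho$ is only almost-everywhere differentiable, but $S_r$ is a Lipschitz hypersurface and the singular set (points over lower-dimensional faces) has vanishing $(d-1)$-measure by item~(4) applied on $S_r$, so the argument goes through.

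For item~(5), I would compute ${\rm Vol}_{d-1}\big(\cup_{F\in\mathcal F_i}C_F\cap S_r\big)$ by differentiating the Steiner-type volume decomposition. Writing ${\rm Vol}_d(K+r\mathbb{B}^d)=\sum_i \big({\rm Vol}_d(\{x\notin K:\rho(x)\le r,\ \Pi_K(x)\in\text{an }i\text{-face}\})\big)+{\rm Vol}_d(K)$, each $i$-face contribution is, by Fubini over the face and its normal cone, $r^{d-i}\kappa_{d-i}v_i(K)$ (this is precisely the local Steiner formula that underlies \eqref{steiner}); differentiating in $r$ gives $(d-i)\,r^{d-i-1}\kappa_{d-i}v_i(K)$ for the $(d-1)$-surface area at radius $r$, which is the asserted formula. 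Alternatively, and perhaps more cleanly, I would note that the union of normal cones over all $i$-faces, intersected with the sphere of radius $r$ around $K$, is a "shell" whose $(d-1)$-content can be read off directly from the contribution of $v_i(K)$ to $\frac{d}{dr}{\rm Vol}_d(K+r\mathbb{B}^d)$.

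The main obstacle I anticipate is item~(1), and more specifically the measure-theoretic bookkeeping needed to make the co-area/conditioning argument rigorous: one must control the non-smoothness of ${\rm dist}(\cdot,K)$ across the boundaries of the normal cones and justify that these exceptional sets carry no $(d-1)$-dimensional mass on each sphere $S_r$. Items (2)--(5) are essentially the standard polytopal normal-cone decomposition plus the local Steiner formula, so the work there is organizational rather than conceptual; the cited references (\cite{goldstein2017gaussian}, \cite{Schneider2007Integral}) presumably supply the pieces, and I would lean on them for the facts about $\nabla\Pi_K$ being the orthogonal projection onto $\mathrm{lin}(F-F)$ on each $C_F$.
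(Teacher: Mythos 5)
Your proposal is correct and follows essentially the same route as the paper: item~(1) via the coarea formula and $\|\nabla\,\mathrm{dist}(\cdot,K)\|=1$, items~(2)--(3) via the fact that on $C_F$ the map $\Pi_K$ is the (affine) orthogonal projection onto $\mathrm{aff}(F)$ so that $\nabla\Pi_K$ has trace $\dim F$, and item~(5) via the local Steiner decomposition over normal cones. The only genuine difference is item~(4): you argue directly that $\overline{C_F}=F+N(F,K)$ is a polyhedron, so $\partial C_F$ lies in finitely many hyperplanes and is therefore Lebesgue-null, whereas the paper appeals to Lipschitz a.e.-differentiability of $\Pi_K$ together with the claim that $\Pi_K$ fails to be differentiable on $\partial C_F$; your version is more elementary and sidesteps the need to verify non-differentiability at every boundary point. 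For item~(5) you propose differentiating the local Steiner volume formula in $r$ rather than computing the surface content of $C_F\cap S_r$ directly (the paper's Fubini-over-face-and-normal-cone computation following Schneider); these are two presentations of the same calculation, and the differentiation step is justified by the same coarea argument you already invoke for item~(1).
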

\begin{proof}
{\it Proof of (1)}. It simply follows from the coarea formula 
$$
\int_{\R^d\setminus K} F(x)dx = \int_{\R_+}\left(
\int_{({\rm dist}(\cdot,K))^{-1}(r)}F(x)\frac{d\mathcal{H}^{d-1}(x)}{\big|\nabla {\rm dist}(x,K)\big|}
\right)dr
$$
and the fact that $\big|\nabla {\rm dist}(x,K)\big|=1$.

{\it Proof of $(2)$}. It follows from the fact that $\Pi_K$ coincide with the identity function in the interior of $K$. 

{\it Proof of $(3)$}. 
Let $(e_1,\ldots,e_i)$ be an orthonormal basis of the linear hull of 
an $i$-dimensional face $F$  and let $(e_{i+1},\ldots,e_d)$ be a basis of the orthogonal vector space to $F$. Then, $\nabla\Pi_K(x)$ is given by the linear application $e_k\mapsto e_k$ for $k\leq i$ and $e_k\mapsto 0$ for $k>i$. This proves $(3)$. 

{\it Proof of $(4)$}. 
It is well known that $\Pi_K$ is a Lipschitz application, hence differentiable almost everywhere. Since $\Pi_K$ is not differentiable on $\partial(C_F)$, it means that $\partial(C_F)$ has null measure, which proves $(4)$.

{\it Proof of $(5)$} 
We follow the proof of the Steiner formula from \cite[pp. 5-6]{Schneider2007Integral}. Let $P\subset\mathbb{R}^d$ be a polytope. Let $r>0$ and let $S_{r,P}$ be the surface of the parallel body $P+r\mathbb{B}^d$. For $k\in \{1,\ldots,d\}$, let $\lambda_k$ denotes the $k$-dimensional Lebesgue measure. Let $\mathcal F_m$ be the set of $m$-dimensional faces of $P$. If $F$ is a face of $P$, let $N(F,P)$ be the normal cone of $P$ at $F$, that is the closed convex cone of outward normal vectors of $P$ at any point $x\in\mathring{F}$ (it does not depend on the choice of $x$). 
    Then, if $F$ is a $m$-dimensional face   ($m\in\{0,\ldots,d-1\}$), then $$\gamma(F,P):=\frac{\lambda_{d-m}(N(F,P)\cap (d\mathbb{B}^d))}{\kappa_{d-m}}$$ is called the external angle of $P$ at $F$. We also put $\gamma(P,P)=1$.
     
    \smallbreak We can now write 
    \begin{equation*}
    S_{r,K}=\bigcup_{m=0}^d\bigcup_{F\in \mathcal F_m } C_F\cap S_{r,P}.
    \end{equation*}
    Since this union is disjointed and since the boundary of $C_F$ has null measure (by item $(4)$), we can then write
    \begin{equation}\label{decomposition}
    {\rm Vol}_{d-1}(S_{r,P})=\sum_{i=0}^{m-1}\sum_{F\in\mathcal F_m}{\rm Vol}_{d-1}(C_F\cap S_{r,P})=\sum_{i=0}^m\sum_{F\in\mathcal F_m}{\rm Vol}_{d-1}(\mathring{C_F}\cap S_{r,P}).
    \end{equation}
   
    Let $F\in\mathcal F_m$. We have $C_F\cap S_{r,P}=\mathring{C_F}\oplus N(F,P)\cap r\mathbb{S}^{d-1}$, where $\mathbb{S}^{d-1}$ is the unit sphere. Thus, we have 
    \begin{eqnarray*}{\rm Vol}_{d-1}(C_F\cap S_{r,P})&=&\int_{F}\left(\int_{\mathbb{S}^{d-m-1}}\mathbf{1}_{x+ry\in N(F,P)\cap r\mathbb{S}^{d-1}}d\lambda_{d-m-1}(y)\right)d\lambda_{m}(x)\\
                                                                       &=&\lambda_m(F)\gamma(F,P)r^{d-m-1}(d-m)\kappa_{d-m}.
    \end{eqnarray*}
    Thus, 
       \begin{eqnarray*}
       \sum_{F\in\mathbb{F}_m}
       {\rm Vol}_{d-1}(C_F\cup S_{r,P})&=&\sum_{F\in\mathbb{F}_m}\lambda_m(F)\gamma(F,P)r^{d-m-1}(d-m)\kappa_{d-m}\\
       &=&r^{d-m-1}V_m(P)(d-m)\kappa_{d-m}.
       \end{eqnarray*}
    Finally, plugging this in \eqref{decomposition} concludes the proof of item $(5)$.

\end{proof}
\medskip 

We end this section by (re-)introducing the intrinsic volume random variable, and by briefly discussing its ultra log-concavity. As noted in the introduction, the \textit{Wills functional} $W(K)=\int_{\mathbb{R}^d}e^{-\pi {\rm dist}^2(x,K)}dx$ satisfies $W(K)=\sum_{k=0}^dv_k(K)$. Renormalising the sequence $v_j(K)$ by $W(K)$ then gives a probability distribution on $\{0,\ldots, d\}$.
\begin{definition}
Let $K\subset\R^d$ be a convex body.
The intrinsic volume random variable associated with $K$ is defined as the random variable $V_K:\Omega\to\{0,\ldots,d\}$ such that 
$$\mathbb{P}[V_K=k]=\frac{v_k(K)}{W(K)},\quad\mbox{for $k\in \{0,\ldots,d\}$.}$$
\end{definition}

It is well-known that the sequence of intrinsic volumes $(v_k(K))_{0\leq k\leq d}$ is ultra-log concave. This fact is proved in e.g \cite[Lemma 4.2]{chevet1976processus}. Proposition \ref{ULCVar} then gives the following corollary:
\begin{corollary}\label{BoundIntr}
Let $K\subset\R^d$ be a convex body and let $V_K$ be the intrinsic volume random variable associated with $K$.
We have $\mathbb{E}[V_K]\leq v_1(K)$ and $\Var(V_K)\leq v_1(K)$.
\end{corollary}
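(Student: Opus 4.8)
The plan is to deduce the corollary directly from Proposition \ref{ULCVar}, so the argument is essentially a matter of identifying the right quantities. First I would recall that, by \eqref{Wills}, the renormalised sequence $x_k := v_k(K)/W(K)$, $0\le k\le d$, is a genuine probability distribution on $\{0,\ldots,d\}$, and that $V_K$ is by definition distributed according to it. Next I would invoke the ultra log-concavity \eqref{ulc} of $(v_k(K))_{0\le k\le d}$ (Chevet \cite{chevet1976processus}, McMullen \cite{mcmullen1975non}); since multiplying every term of a sequence by the same positive constant $1/W(K)$ leaves the defining inequalities \eqref{ULC} unchanged, the sequence $(x_k)$ is itself ultra log-concave. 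Hence $(x_k)$ is an ultra log-concave distribution in the precise sense required by Proposition \ref{ULCVar}.

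It then remains to compute the ratio $x_1/x_0$ that governs the bounds in Proposition \ref{ULCVar}. Since $v_0(K)=1$ is the Euler characteristic, we have $x_0 = 1/W(K)$ and $x_1 = v_1(K)/W(K)$, so $x_1/x_0 = v_1(K)$. Applying part (a) of Proposition \ref{ULCVar} yields $\mathbb{E}[V_K]\le x_1/x_0 = v_1(K)$, and part (b) yields $\Var(V_K)\le x_1/x_0 = v_1(K)$, which is exactly the assertion of the corollary.

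I do not anticipate any real obstacle here: the corollary is a routine specialisation of Proposition \ref{ULCVar}, and the only inputs beyond that proposition are the normalisation identity \eqref{Wills}, the value $v_0(K)=1$, and the already-cited ultra log-concavity of the intrinsic volumes. The single point worth stating explicitly is that ultra log-concavity is preserved under scaling by a positive constant, which is immediate from \eqref{ULC}.
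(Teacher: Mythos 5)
Your proposal is correct and follows exactly the route the paper intends: the paper presents Corollary \ref{BoundIntr} as an immediate consequence of Proposition \ref{ULCVar} together with the ultra log-concavity of $(v_k(K))$, and you simply spell out the (routine) details, namely that normalisation by $W(K)$ preserves ultra log-concavity and that $x_1/x_0 = v_1(K)$ because $v_0(K)=1$.
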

\begin{remark} 
It is shown in \cite{lotz2020concentration} that $\Var(V_K)\leq 4d$. The bound $\Var(V_K)\leq v_1(K)$ is actually better as long as $K\subset d^{\alpha}\mathbb{B}^d$ with $\alpha\in[0,\frac12)$, thanks to the monotonicity property
(see also (\ref{hello})). On the other hand, Lotz and Tropp \cite{LotzTropp} have recently shown the bound
$$\Var(V_K)\leq 2\,\frac{d-\mathbb{E}[V_K]}{d+\mathbb{E}[V_K]}\,\mathbb{E}[V_K].$$
Even if won't need their bound in our approach, we note that it is always sharper (up to a factor 2) than the bound
from Corollary \ref{BoundIntr}, thanks to Proposition \ref{ULCVar}(a).
\end{remark}

\section{Further comments}\label{further}
\subsection{Convergence in law in Theorem \ref{main}}\label{sec31}
Establishing the mere convergence in law instead of the convergence in total variation in Theorem \ref{main} is actually quite simple and follows from elementary properties of the intrinsic volumes listed above. 

Indeed, let $K$ be a convex body
of $\R^d$ and let $V_K:\Omega\to\{0,\ldots,d\}$ be the intrinsic random variable associated with it.
Let also $X_K:\Omega\to\R^d$ be a random vector distributed according to the Hadwiger-Wills density
(\ref{muK})  associated with $K$.

We deduce from (\ref{idinlaw}) that
\begin{eqnarray*}
&&\frac{{\rm dist}^2(X_K,K)-\mathbb{E}[{\rm dist}^2(X_K,K)]}{\sqrt{\Var[{\rm dist}^2(X_K,K)]}}\\
&\overset{\mathcal{L}}{= }&\frac{1}{\sigma_K}\sum_{k=1}^{d-V_K}\left(\gamma_k-\frac{1}{2}\right)
+\frac{1}{2\sigma_K}(d-V_K-2\mathbb{E}[\pi {\rm dist}^2(X_K,K)]),
\end{eqnarray*}
where $\sigma^2_K=\Var(\pi {\rm dist}^2(X_K,K))$. 
We then deduce the following relationships, whose
proofs are straightforward and left to the reader.
\begin{lemma}\label{relationsVarEsp}
Let $\delta_K=\mathbb{E}[{\rm dist}^2(X_K,K)]$, $\sigma^2_K=\Var(\pi {\rm dist}^2(X_K,K))$ and  $\tau_K^2=\Var(V_K)$. Then
\begin{enumerate}
\item $\delta_K=\frac{d-\mathbb{E}[V_K]}{2}$;
\item $\sigma_K^2=\frac{\tau_K^2}{4}+\delta_K$.
\end{enumerate}
\end{lemma}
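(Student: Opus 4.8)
The plan is to read off both identities from the distributional identity (\ref{idinlaw}) by conditioning on $V_K$ and invoking the law of total expectation and the law of total variance. Write $H_K=\pi\,{\rm dist}^2(X_K,K)$ for the information content, so that, with the normalisations used in the statement, $\delta_K=\mathbb{E}[H_K]$ and $\sigma_K^2=\Var(H_K)$; recall also that a $\Gamma(\tfrac12,1)$ random variable has mean $\tfrac12$ and variance $\tfrac12$. By (\ref{idinlaw}), conditionally on $\{V_K=k\}$ the variable $H_K$ is distributed as a sum of $d-k$ independent $\Gamma(\tfrac12,1)$ random variables, whence, almost surely,
\begin{equation*}
\mathbb{E}[H_K\mid V_K]=\tfrac12\,(d-V_K),\qquad \Var(H_K\mid V_K)=\tfrac12\,(d-V_K).
\end{equation*}

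From here, assertion (1) is obtained by taking expectations in the first of these identities: $\delta_K=\mathbb{E}[H_K]=\mathbb{E}\big[\tfrac12(d-V_K)\big]=\tfrac12(d-\mathbb{E}[V_K])$. For assertion (2) I would apply the law of total variance, $\sigma_K^2=\Var(H_K)=\mathbb{E}\big[\Var(H_K\mid V_K)\big]+\Var\big(\mathbb{E}[H_K\mid V_K]\big)$: the first term equals $\mathbb{E}\big[\tfrac12(d-V_K)\big]=\delta_K$ by part (1), while the second equals $\Var\big(\tfrac12(d-V_K)\big)=\tfrac14\Var(V_K)=\tfrac14\tau_K^2$, and adding the two contributions gives $\sigma_K^2=\tfrac14\tau_K^2+\delta_K$.

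There is no genuine obstacle here; the computation is routine, and the two points worth flagging are simply the bookkeeping of the factor $\pi$ relating ${\rm dist}^2(X_K,K)$ to $H_K$, and the convenient coincidence that the $\Gamma(\tfrac12,1)$ law has mean equal to variance, which is precisely what makes the conditional mean and conditional variance of $H_K$ both equal to $\tfrac12(d-V_K)$ and thereby yields the clean relation in (2).
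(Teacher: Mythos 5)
Your proof is correct, and since the paper simply declares these relations ``straightforward and left to the reader,'' there is no competing argument to compare against; the conditioning-on-$V_K$ route via the laws of total expectation and total variance is exactly the intended one. One small inaccuracy in your write-up: you assert that ``with the normalisations used in the statement, $\delta_K=\mathbb{E}[H_K]$,'' but the lemma as written defines $\delta_K=\mathbb{E}[{\rm dist}^2(X_K,K)]$ without a factor of $\pi$, so literally $\delta_K=\mathbb{E}[H_K]/\pi$, and assertion (1) would then read $\delta_K=\tfrac{1}{2\pi}(d-\mathbb{E}[V_K])$. This is a typo in the paper (the definitions of $\delta_K$ and $\sigma_K^2$ are inconsistent in the placement of $\pi$), and the subsequent uses of $\delta_K$ in equations (15)–(16) confirm that the intended definition is $\delta_K=\mathbb{E}[\pi\,{\rm dist}^2(X_K,K)]=\mathbb{E}[H_K]$. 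You effectively work with the intended definition, which makes your derivation of both (1) and (2) valid; it would have been cleaner to flag the discrepancy explicitly rather than absorb it silently.
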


Applying Lemma \ref{relationsVarEsp}, we can write
\begin{equation}\label{15}
\mathbb{E}\left[
\left\{
\frac1{2\sigma_K}(d-V_K-2\delta_K)
\right\}^2
\right] = \frac{\tau^2_K}{4\sigma^2_K}=\frac1{1+\frac{4\delta_K}{\tau^2_K}}
\end{equation}
and
\begin{equation}\label{16}
\mathbb{E}\left[
\left\{
\frac1{\sigma_K}\sum_{k=d-V_K}^d (\gamma_k-\frac12)
\right\}^2
\right] = \frac{\mathbb{E}[V_K]}{\frac12\tau^2_K+2\delta_K}
\leq \frac{\mathbb{E}[V_K]}{2\delta_K}.
\end{equation}
Thanks to Proposition \ref{ULCVar} and the fact that intrinsic volumes form an ultra log-concave sequence, we have that 
$\mathbb{E}[V_K]\leq v_1(K)$ and $\tau^2_K\leq v_1(K)$.

Assume now that $K\subset c\,d^\alpha \mathbb{B}^d$ with $c>0$, $\alpha\in[0,\frac12)$ and $d\to\infty$.
We deduce that 
\begin{equation}\label{hello}
v_1(K)\leq v_1(c\,d^\alpha\mathbb{B}^d)=c\,d^{1+\alpha}\sqrt{\pi}\frac{\Gamma(\frac{d}2+\frac12)}{\Gamma(\frac{d}2+1)}
\sim c\,\sqrt{2\pi}\,d^{\frac12+\alpha} = o(d),
\end{equation}
where the equivalent comes from the fact that
$\Gamma(x+1)\sim \sqrt{2\pi}\,x^{x+\frac12}e^{-x}$ and $\Gamma(x+\frac12)\sim \sqrt{2\pi}\,x^x\,e^{-x}$ 
as $x\to\infty$. We deduce that $\delta\sim \frac{d}2$, that $\mathbb{E}[V_K]=o(d)$ and that
$\tau^2_K=o(d)$ as $d\to\infty$. As a consequence, the left-hand sides of (\ref{15}) and (\ref{16}) go to
zero as $d\to\infty$.

On the other hand, as $d\to\infty$ we have $\sigma^2_K\sim \frac{d}2$ and, by the classical central limit theorem,
 $$
 \frac{1}{\sigma_K}\sum_{k=1}^d\left(\gamma_k-\frac{1}{2}\right)
 =\frac{\sqrt{d}}{\sigma_K}\frac{1}{\sqrt{d}}\sum_{k=1}^d\left(\gamma_k-\frac{1}{2}\right)\overset{\mathcal L}{\longrightarrow} \mathcal{N}(0,1).
 $$
Finally, Slutsky's lemma allows to conclude that
$$
\frac{
{\rm dist}^2(X_K,K)
-\mathbb{E}[{\rm dist}^2(X_K,K)]}
{
\sqrt{\Var[{\rm dist}^2(X_K,K)]}
}
\overset{\mathcal L}{\longrightarrow} \mathcal{N}(0,1).
$$
\qed

\begin{remark}
Because we relied above on a `Slutsky-type' arguments, 
we cannot easily pass from this convergence in law to a convergence in total variation.
New arguments are thus needed.
\end{remark}
\subsection{Case of the cube}
The main assumption of Theorem \ref{main} 
(namely, $K\subset c\,d^\alpha \mathbb{B}^d$ with $c>0$ and $0\leq \alpha<\frac12$) 
is not sharp, as is shown by investigating the case of the scaled cubes.  
More precisely,
assume that $K$ is a hypercube of the form $[-T_d,T_d]^{d}$ with $T_d>0$.
It is immediate to check that 
\begin{equation}\label{formula}
{\rm dist}^2(x,K) = \sum_{k=1}^{d} (|x_k|-T_d)_+^2
\end{equation}
for all $x= (x_1,\ldots,x_{d})\in\R^{d}$, where $(\ldots)_+^2$ is shorthand for $[(\ldots)_+]^2$. 
This implies that the marginals of $X_{d}$ are i.i.d.,
with a common density given by  $u\rightarrow \frac{e^{-\pi\, (|u|-T_d)_+^2}}{1+2T_d}$. Furthermore, these marginals all have an absolutely continuous part with respect to the Lebesgue measure, and the variance verifies $\Var(H_{d}) = \frac{8\pi^2d(1+3T_d)}{(1+2T_d)^2}$. According e.g. to \cite{BAs16}, 
$$d_{TV}\left(\frac{H_{d}-\mathbb{E}[H_{d}]}{\sqrt{\Var(H_{d})}},\mathcal{N}(0,1)\right)=O\left(\sqrt{\frac{T_d\vee 1}{d}}\right).$$
That is, the CLT in total variation 
holds true if $T_d=o(d)$, whereas Theorem \ref{main} allows to prove it only for $K\subset{c\,d^{\alpha}\mathbb{B}^d}$ with $c>0$ and $\alpha<\frac{1}{2}$.

\section{Stein's method for Gibbs measures}\label{Stein}
As stated in the introduction, the main ingredient for our proof is the (one-dimensional) Stein's method, which makes an extensive use of the following integration by part formula, known as Stein's lemma, initially stated for Gaussian random variables and which we state below in a slightly extended setting.
\begin{lemma}\label{IPP}
Let $\mu$ be a probability law, whose density is given by \begin{equation}\label{Gibbs}g_\mu:=e^{-\phi}\quad\mbox{with $\phi\in\mathcal{C}^1(\mathbb{R}^d,\mathbb{R})$.}\end{equation}
Let $X\sim\mu$.
Then, for all absolutely continuous function $f:\mathbb{R}^d\rightarrow\mathbb{R}^d$ such that
\begin{equation}\label{Cond1}\mathbb{E}[|f(X)|]+\mathbb{E}[|\langle f(X),\nabla \phi(X)\rangle_{\mathbb{R}^d}|]+\mathbb{E}[|{\rm Tr}(\nabla f(X))|]<\infty,\end{equation}
we have
\begin{equation}\label{IPPformula}
\mathbb{E}[\langle f(X),\nabla \phi(X)\rangle_{\mathbb{R}^d}]=\mathbb{E}[{\rm Tr}(\nabla f(X))].
\end{equation}
\end{lemma}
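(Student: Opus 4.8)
The plan is to establish the integration-by-parts formula \eqref{IPPformula} by a direct divergence-theorem argument, first under a decay/regularity hypothesis that makes all boundary terms vanish, and then removing that hypothesis by a truncation and density argument so as to cover the full class of $f$ satisfying \eqref{Cond1}. The key observation is that the density $g_\mu = e^{-\phi}$ turns $\nabla\phi$ into a logarithmic derivative: $\nabla\phi = -\nabla g_\mu / g_\mu$, so that
\begin{equation*}
\langle f, \nabla\phi\rangle\, g_\mu = -\langle f, \nabla g_\mu\rangle = {\rm Tr}(\nabla f)\, g_\mu - {\rm div}(f\, g_\mu),
\end{equation*}
using the product rule ${\rm div}(f g_\mu) = \langle f, \nabla g_\mu\rangle + g_\mu\, {\rm Tr}(\nabla f)$. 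Integrating this identity over $\R^d$ against Lebesgue measure, the left-hand side becomes $\EE[\langle f(X),\nabla\phi(X)\rangle]$ and the first term on the right becomes $\EE[{\rm Tr}(\nabla f(X))]$, so everything reduces to showing that $\int_{\R^d} {\rm div}(f\, g_\mu)\,dx = 0$.

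First I would treat the case where $f$ is $C^1$ with compact support (or, slightly more generally, where $f\, g_\mu$ and its divergence are integrable and $f\, g_\mu$ vanishes suitably at infinity): then $\int_{\R^d}{\rm div}(f\, g_\mu)\,dx = 0$ is exactly the divergence theorem applied on large balls $B_R$, where the boundary integral $\int_{\partial B_R}\langle f\, g_\mu, \nu\rangle\, d\sigma$ tends to $0$ as $R\to\infty$ along a suitable subsequence (such a subsequence exists because $\int_{\R^d}|{\rm div}(f g_\mu)|\,dx<\infty$ forces $\liminf_R \int_{\partial B_R}|f|\, g_\mu\, d\sigma = 0$). This handles \eqref{IPPformula} for nice $f$. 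To pass to general absolutely continuous $f$ satisfying \eqref{Cond1}, I would introduce smooth cutoffs $\chi_n$ with $\chi_n = 1$ on $B_n$, $\chi_n = 0$ outside $B_{2n}$, $\|\nabla\chi_n\|_\infty \le C/n$, apply the identity to $f_n := \chi_n f$, and let $n\to\infty$: the left-hand side and the ${\rm Tr}(\nabla f)$ term converge by dominated convergence thanks to the three finiteness conditions in \eqref{Cond1}, while the extra term $\EE[\langle f(X), \nabla\chi_n(X)\rangle]$ coming from the product rule is bounded by $(C/n)\,\EE[|f(X)|]\to 0$.

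The main obstacle is the handling of the regularity of $f$ and of $\phi$: the statement only asks $f$ to be \emph{absolutely continuous} and $\phi$ to be $C^1$, so $\nabla f$ exists only almost everywhere and the pointwise product rule must be justified in a weak/a.e. sense (the Leibniz rule for Sobolev/absolutely continuous functions), and one must be careful that the divergence theorem still applies — this is where a mollification of $f$, combined with the fact that the boundary-term estimate only needs $L^1$ control, does the work. In the application we have in mind, $\phi(x) = \pi\,{\rm dist}^2(x,K) + \log W(K)$ is not $C^1$ on all of $\R^d$ (it fails to be twice differentiable on a set of measure zero), and $f$ will typically be something like $x\mapsto \widehat\EE[\nabla H(Y_t)]$ composed with the relevant maps; so I would note that $\nabla\phi(x) = 2\pi(x - \Pi_K(x))$ holds for a.e.\ $x$ and is globally Lipschitz, which is more than enough regularity for the argument above to go through once the boundary terms are controlled via \eqref{Cond1}. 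A secondary, purely bookkeeping point is to verify that the finiteness hypotheses in \eqref{Cond1} are exactly the integrability statements needed to invoke dominated convergence at each limiting step, which they are by construction.
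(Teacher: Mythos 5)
Your proposal is correct and takes essentially the same route as the paper's proof: establish the identity for compactly supported $f$ via integration by parts (you use the divergence theorem on large balls, the paper uses Fubini plus coordinate-wise one-dimensional integration by parts — these are equivalent), then pass to general $f$ satisfying \eqref{Cond1} by multiplying by smooth cutoffs and invoking dominated convergence, with the extra cutoff term controlled by $\mathbb{E}[|f(X)|]<\infty$.
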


\begin{proof}
We first consider the case where $f$ has compact support. 
We have, thanks to Fubini's theorem and the fact that $ |f_i(x)|e^{-\phi(x)}\to 0$ for all $i\in\{1,\ldots,n\}$ and $\|x\|\to\infty$,

\begin{eqnarray*}
\mathbb{E}[\langle f(X),\nabla\phi(X)\rangle_{\mathbb{R}^d}] &=&\int_{\mathbb{R}^d}\left(\sum_{i=1}^df_i(x)\frac{\partial \phi}{\partial x_i}(x)\right)e^{-\phi(x)}dx\\
&=&\sum_{i=1}^n\int_{\mathbb{R}^{d-1}}\left(\int_{\mathbb{R}}f_i(x)\frac{\partial \phi}{\partial x_i}(x)e^{-\phi(x)}dx_i\right)\prod_{j\neq i}dx_j\\
&=&\sum_{i=1}^n\int_{\mathbb{R}^{d-1}}\left(\int_{\mathbb{R}}\frac{\partial f_i}{\partial x_i}(x)e^{-\phi(x)}dx_i\right)\prod_{j\neq i}dx_j\\
&=&\mathbb{E}[{\rm Tr}(\nabla f(X))].\\
\end{eqnarray*}

Now, let $f$ be a function satisfying the hypotheses of Lemma \ref{IPP}, and let $(p_n)_{n\in\mathbb{N}^*}$ be a sequence of smooth functions from $\mathbb{R}^d$ to $\mathbb{R}^d$ such that $p_n(x)=1$ if $\|x\|\leq n$, $p_n(x)=0$ if $\|x\|\geq n+1$ and $\sup_{n}\|\nabla p_n\|_{\infty}<\infty$. Then, the sequence $u_n=(fp_n)_{n\in\mathbb{N}^*}$ verifies 
$$\mathbb{E}[\langle u_n(X),\nabla \phi(X)\rangle_{\mathbb{R}^d}]=\mathbb{E}[{\rm Tr}(\nabla u_n(X))].$$
Thanks to \eqref{Cond1}, we can apply the dominated convergence theorem from which the desired conclusion follows. \end{proof}
When the target law is the normal distribution and the total variation is the chosen distance, one of the main achievement of Stein's method is the following bound (see e.g. \cite{nourdin2012normal}).
\begin{proposition}
Let $F$ be a real valued random variable and let $N$ be distributed according to the standard normal law. Then,
\begin{equation}\label{boundTotVar}
d_{TV}(F,N)\leq \sup_{g}\big|\mathbb{E}[g'(F)-Fg(F)]\big|,
\end{equation}
where the supremum runs over the set of all absolutely continuous real valued functions $g$ such that $\|g\|_{\infty}\leq \frac{\sqrt{\pi}}{2}$ and $\|g'\|_{\infty}\leq 2$.
\end{proposition}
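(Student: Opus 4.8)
The statement to prove is the classical Stein bound (\ref{boundTotVar}) for the total variation distance to the standard normal law. The plan is to use the well-known Stein equation and its solution's regularity estimates.

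\medskip

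\textbf{Step 1: Setting up the Stein equation.} Fix a Borel set $A\in\mathcal B(\mathbb R)$ and let $h=\mathbf 1_A-\Phi(A)$, where $\Phi$ denotes the standard Gaussian measure, so that $\mathbb E[h(N)]=0$. I would introduce the Stein equation
\begin{equation*}
g'(x)-xg(x)=h(x),\qquad x\in\mathbb R,
\end{equation*}
and note that its bounded solution is given explicitly by
\begin{equation*}
g_h(x)=e^{x^2/2}\int_{-\infty}^x h(y)e^{-y^2/2}\,dy=-e^{x^2/2}\int_x^{\infty}h(y)e^{-y^2/2}\,dy.
\end{equation*}
Evaluating the identity at $F$ and taking expectations gives $\mathbb P(F\in A)-\Phi(A)=\mathbb E[g_h'(F)-Fg_h(F)]$, so that
\begin{equation*}
d_{TV}(F,N)=\sup_{A\in\mathcal B(\mathbb R)}\bigl|\mathbb E[g_h'(F)-Fg_h(F)]\bigr|.
\end{equation*}

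\textbf{Step 2: Regularity of the solution.} The crucial analytic input is that for $h=\mathbf 1_A-\Phi(A)$ (hence $0\le h+\Phi(A)\le 1$), the solution $g_h$ satisfies the uniform bounds $\|g_h\|_\infty\le\sqrt{\pi/2}$ and $\|g_h'\|_\infty\le 2$. I would prove these in the standard way: for the sup-norm bound one writes, using $\int_{-\infty}^{\infty}h(y)e^{-y^2/2}\,dy=0$, the representation $g_h(x)=e^{x^2/2}\bigl(\Phi(x)\wedge(1-\Phi(x))\bigr)\cdot(\text{something bounded})$; more precisely one checks that $|g_h(x)|\le e^{x^2/2}\min\bigl(\Phi(x),1-\Phi(x)\bigr)$ and then uses the elementary inequality $e^{x^2/2}\min(\Phi(x),1-\Phi(x))\le\sqrt{2\pi}/4=\sqrt{\pi/2}\cdot\tfrac12\cdot\sqrt2$… I would in fact just invoke the sharp bound $\sup_x e^{x^2/2}\Phi(x)(1-\Phi(x))\le\frac14\sqrt{2\pi}$ to conclude $\|g_h\|_\infty\le\sqrt{\pi/2}$. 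For the derivative, from $g_h'(x)=xg_h(x)+h(x)$ and a direct estimate (splitting at $x\ge0$ and $x<0$ and bounding each term) one obtains $\|g_h'\|_\infty\le2$. Moreover $g_h$ is absolutely continuous, since $h$ is bounded and the integral representation is Lipschitz on compacts with the stated global derivative bound.

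\medskip

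\textbf{Step 3: Conclusion.} Since every $g_h$ arising from $A\in\mathcal B(\mathbb R)$ lies in the class $\mathcal G$ of absolutely continuous functions with $\|g\|_\infty\le\sqrt\pi/2$ and $\|g'\|_\infty\le2$, we get
\begin{equation*}
d_{TV}(F,N)=\sup_{A}\bigl|\mathbb E[g_h'(F)-Fg_h(F)]\bigr|\le\sup_{g\in\mathcal G}\bigl|\mathbb E[g'(F)-Fg(F)]\bigr|,
\end{equation*}
which is (\ref{boundTotVar}). The main obstacle is purely the verification of the two regularity constants $\sqrt\pi/2$ and $2$ for the Stein solution associated with indicator test functions; these are standard but slightly delicate real-analysis estimates on the Gaussian Mills ratio. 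Since the excerpt explicitly points to \cite{nourdin2012normal} for this proposition, I would in practice simply cite that reference for the constants and present Steps 1 and 3 as the argument; reproducing the sharp constant computation in full is the only genuinely technical part.
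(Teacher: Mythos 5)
The paper does not actually prove this proposition: it simply cites \cite{nourdin2012normal} and moves on. Your sketch supplies the standard Stein-equation argument that one would find in that reference (solve $g'(x)-xg(x)=\mathbf 1_A(x)-\Phi(A)$ explicitly, establish the regularity bounds on the solution, then take the supremum over $A$), and you close by noting that in practice you would cite the reference for the constants, which is exactly what the paper does. So your approach matches the paper's.

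One small caveat worth flagging, since the paper is itself inconsistent on the constant: the proposition you are proving states $\|g\|_\infty\leq\sqrt{\pi}/2\approx 0.886$, whereas Proposition~\ref{goodlife} later uses $\sqrt{\pi/2}\approx 1.253$. Your write-up mixes both, and the intermediate chain
$\sqrt{2\pi}/4=\sqrt{\pi/2}\cdot\tfrac12\cdot\sqrt2$
is not an identity ($\sqrt{2\pi}/4\approx 0.627$ while $\sqrt{\pi/2}\cdot\tfrac12\cdot\sqrt2=\sqrt{\pi}/2\approx 0.886$). Fortunately this does not affect correctness: the sharp bound for $h:\R\to[0,1]$ is $\|g_h\|_\infty\leq\sqrt{2\pi}/4$, which is smaller than both $\sqrt{\pi}/2$ and $\sqrt{\pi/2}$, so every version of the stated class of test functions contains the Stein solutions and the inequality holds either way. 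If you clean up the arithmetic, just state $\|g_h\|_\infty\leq\sqrt{2\pi}/4\leq\sqrt{\pi}/2$ and drop the spurious equality.
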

Let us now state a bound for the total variation distance between a random variable distributed according to (\ref{Gibbs}) and the standard normal law.
\begin{proposition}\label{goodlife}
Let $\mu$ be a probability law whose density is given by (\ref{Gibbs}). Let $X\sim\mu$.

\smallbreak Let $H\in\mathcal{C}^1(\mathbb{R}^d,\mathbb{R})$ be a function such that $H$ and its partial derivatives satisfies the condition (\ref{Cond1}) stated above (with $f=H$).

\smallbreak Let $ X_\infty$ be an independent copy of $X$, set $Y_0=\nabla\phi(X)$, $Y_{\infty}=\nabla\phi(X_{\infty})$  and $Y_t=e^{-t}\nabla\phi(X)+\sqrt{1-e^{-2t}}\nabla\phi(X_{\infty})$, $t>0$.

\smallbreak Let $\mathbb{E}_{\infty}$ denote the expectation with respect to $X_{\infty}$, and set $\mathbf{E}=\mathbb{E}\otimes\mathbb{E}_{\infty}$.

\smallbreak Let $F:=\frac{H(Y_0)-m}{\sigma}$ with $m=\mathbb{E}[H(Y_0)]$ and $\sigma^2=\Var[H(Y_0)]$, and let $N\sim \mathcal{N}(0,1)$ be a standard normal random variable. 

\smallbreak Then

\begin{equation}\label{boundTVGibbs}d_{TV}(F,N)\leq A+B \end{equation}
with
\begin{eqnarray*}
A&=& \frac{2}{\sigma^2}\sqrt{\Var\left(\int_0^{\infty}e^{-t}\langle ({\rm Hess}\phi)(X)\nabla H(Y_0),\mathbb{E}_{\infty}[\nabla H(Y_{t})]\rangle dt\right)},\\
B&=&\sup_{g}\big|\mathbf{E}[C(g)]\big|,
\end{eqnarray*}
with $$C(g)=\frac{1}{\sigma}\int_0^{\infty}e^{-2t}(g(F)-F){\rm Tr}\left[\big(({\rm Hess}\phi)(X)-({\rm Hess}\phi)(X_{\infty})\big)({\rm Hess}H)(Y_t)\right]dt$$
and where the supremum runs over the set of all functions $g\in\mathcal C^1$ such that $|g(x)|\leq \sqrt{\frac{\pi}{2}}$ and$|g'(x)|\leq 2$ for all $x\in\R$.
\end{proposition}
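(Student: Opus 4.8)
The plan is to combine the Stein bound \eqref{boundTotVar} with an interpolation (semigroup) identity for the random variable $H(Y_0)$, built out of the independent copy $X_\infty$. First I would set up the Ornstein--Uhlenbeck-type interpolation at the level of the gradients: $Y_t=e^{-t}Y_0+\sqrt{1-e^{-2t}}\,Y_\infty$ interpolates between $Y_0$ (at $t=0$) and $Y_\infty$ (at $t=\infty$), and crucially, since $\nabla\phi(X)$ and $\nabla\phi(X_\infty)$ play the roles of two independent copies, the pair $(Y_0,Y_t)$ is exchangeable for each fixed $t$ in the appropriate sense after integrating against $\mathbb{E}_\infty$. The key starting point is the exact identity, valid for nice $g$,
\begin{equation*}
\mathbb{E}[F g(F)] = \frac{1}{\sigma}\,\mathbf{E}\!\left[\int_0^\infty e^{-t}\,\big\langle \nabla H(Y_0),\, \mathbb{E}_\infty[\nabla H(Y_t)]\big\rangle\, g'(F)\, dt\right] + (\text{remainder}),
\end{equation*}
which one obtains by writing $H(Y_0)-\mathbf{E}[H(Y_0)]$ as the $t$-integral of $-\frac{d}{dt}\mathbb{E}_\infty[H(Y_t)]$ and then applying the integration-by-parts formula \eqref{IPPformula} in the variable $X$ (using $\phi$), the chain rule $\frac{d}{dt}Y_t = -e^{-t}Y_0 + \frac{e^{-2t}}{\sqrt{1-e^{-2t}}}Y_\infty$ producing both an $e^{-t}$ term and an $e^{-2t}$ term involving ${\rm Hess}\,\phi$.

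Next I would massage the main term. Denote $\Psi = \int_0^\infty e^{-t}\langle ({\rm Hess}\,\phi)(X)\nabla H(Y_0), \mathbb{E}_\infty[\nabla H(Y_t)]\rangle\, dt$; the goal is to show $\mathbb{E}[Fg(F)]$ equals $\sigma^{-2}\mathbb{E}[\Psi]\cdot\mathbb{E}[g'(F)]$ up to controllable errors, so that plugging into \eqref{boundTotVar} and using $\sigma^{-2}\mathbb{E}[\Psi]=1$ (which follows by taking $g(F)=F$, i.e.\ testing against the identity and using $\mathbb{E}[F^2]=1$) collapses the leading order. The discrepancy between $\mathbb{E}[\Psi\, g'(F)]$ and $\mathbb{E}[\Psi]\,\mathbb{E}[g'(F)]$ is exactly a covariance, and since $\|g'\|_\infty\le 2$ and $\|g\|_\infty\le\sqrt{\pi/2}$, bounding it by $2\sqrt{\Var(\Psi)}$ (Cauchy--Schwarz, after centering) gives the term $A$. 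The second term $B$ collects the $e^{-2t}$ contribution: this is where the difference $({\rm Hess}\,\phi)(X) - ({\rm Hess}\,\phi)(X_\infty)$ appears, because the $e^{-2t}$ piece of $\frac{d}{dt}Y_t$ feeds a ${\rm Hess}\,H$ evaluated at $Y_t$ through a second integration by parts, and one symmetrizes in $(X,X_\infty)$ to replace a lone ${\rm Hess}\,\phi(X)$ by the difference (using that at $t=\infty$ the two copies are interchangeable, so the symmetric part integrates to zero). Carefully tracking constants and the factor $e^{-2t}$ (whose integral is $\tfrac12$) produces precisely $C(g)$ and hence $B=\sup_g|\mathbf{E}[C(g)]|$.

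I expect the main obstacle to be the bookkeeping in the double integration by parts: one must differentiate $\mathbb{E}_\infty[H(Y_t)]$ in $t$, integrate by parts in $X$ against the density $e^{-\phi}$ to move a $\nabla\phi(X)$ onto a gradient (which is where ${\rm Hess}\,\phi$ is generated when the $\nabla\phi(X)$ sits inside the argument $Y_0$ of $H$), and keep the two resulting families of terms — the $e^{-t}$ family (leading to $A$) and the $e^{-2t}$ family (leading to $B$) — cleanly separated, all while justifying the interchange of $\int_0^\infty dt$, $\mathbb{E}$, $\mathbb{E}_\infty$ and differentiation using the integrability hypothesis \eqref{Cond1}. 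A secondary technical point is the symmetrization argument that converts ${\rm Hess}\,\phi(X)$ into ${\rm Hess}\,\phi(X)-{\rm Hess}\,\phi(X_\infty)$ in the $B$-term: this relies on the fact that $(X,X_\infty)$ and $(X_\infty,X)$ have the same law together with the structure of $Y_t$, and one should check that the "symmetric" remainder genuinely vanishes rather than merely being small. Once these identities are in place, the passage to \eqref{boundTVGibbs} via \eqref{boundTotVar} is immediate, with $A$ coming from Cauchy--Schwarz on a covariance and $B$ being kept as is for later estimation (in the proof of Theorem \ref{main}) by Brascamp--Lieb.
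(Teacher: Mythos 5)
Your overall architecture matches the paper's: interpolating via $Y_t=e^{-t}Y_0+\sqrt{1-e^{-2t}}\,Y_\infty$, writing $H(Y_0)-H(Y_\infty)=-\int_0^\infty\frac{d}{dt}\mathbb{E}_\infty[H(Y_t)]\,dt$, and applying Lemma~\ref{IPP} once in $X$ and once in $X_\infty$ so that the $e^{-t}$ contribution yields $A$ (Cauchy--Schwarz on a centered covariance) and the $e^{-2t}$ contribution yields $B$.

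There is, however, a genuine gap: the claim $\sigma^{-2}\mathbb{E}[\Psi]=1$ is false, and without fixing it your $B$-term would not come out as the stated one. Testing the identity with $g(x)=x$ does \emph{not} give $\sigma^2=\mathbb{E}[\Psi]$; because the $e^{-2t}$ contribution does not vanish for $g=\mathrm{Id}$, one obtains instead
$$\sigma^2 \;=\; \mathbb{E}[\Psi] \;+\; \sigma\int_0^\infty e^{-2t}\,\mathbf{E}\Big[F\,{\rm Tr}\big[\big(({\rm Hess}\,\phi)(X)-({\rm Hess}\,\phi)(X_\infty)\big)({\rm Hess}\,H)(Y_t)\big]\Big]\,dt.$$
The residual $e^{-2t}$ piece (carrying weight $F$) must then be traded against the $e^{-2t}$ piece carrying weight $g(F)$; this trade is exactly why $C(g)$ in the statement involves $g(F)-F$ rather than $g(F)$. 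A secondary misconception concerns how the difference of Hessians arises: it is not produced by a symmetrization argument in which some symmetric part integrates to zero. The $({\rm Hess}\,\phi)(X)$ term is generated by the IPP in $X$, where $\nabla H(Y_t)$ is differentiated through the summand $e^{-t}\nabla\phi(X)$ of $Y_t$; the $-({\rm Hess}\,\phi)(X_\infty)$ term is generated by the separate IPP in $X_\infty$, where $\nabla H(Y_t)$ is differentiated through $\sqrt{1-e^{-2t}}\,\nabla\phi(X_\infty)$, the minus sign being inherited from the coefficient $-e^{-2t}/\sqrt{1-e^{-2t}}$ of $Y_\infty$ in $\frac{d}{dt}Y_t$. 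Both terms appear directly, and nothing needs to vanish. Relatedly, the pair $(Y_0,Y_t)$ is not exchangeable in general, since $Y_0=\nabla\phi(X)$ need not be Gaussian; fortunately the paper's proof never invokes such exchangeability.
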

\begin{proof}
Without loss of generality, let us assume that $m=0$. Let $g\in \mathcal{C}^1(\mathbb{R})$ with $\|g\|_\infty\leq \sqrt{\frac\pi 2}$ and $\|g'\|_{\infty}\leq 2$. 
We can write

\begin{eqnarray*}
\mathbb{E}[Fg(F)]&=&\mathbf{E}\left[\frac{1}{\sigma}(H(Y_0)-H(Y_{\infty}))g\left(\frac{H( Y_0)}{\sigma}\right)\right]\\
&=&-\frac{1}{\sigma}\int_0^{\infty}\frac{d}{dt}\mathbf{E}\left[H( Y_t)g\left(\frac{H( Y_0)}{\sigma}\right)\right]dt\\
&=&\frac{1}{\sigma}\int_0^{\infty}e^{-t}\mathbf{E}\left[g\left(\frac{H(Y_0)}{\sigma}\right)\left\langle \nabla H(Y_t), Y_0\right\rangle\right]dt\\
&&-\frac{1}{\sigma}\int_0^{\infty}\frac{e^{-2t}}{\sqrt{1-e^{-2t}}}\mathbf{E}\left[g\left(\frac{H(Y_0)}{\sigma}\right)\left\langle \nabla H(Y_t), Y_{\infty}\right\rangle\right]dt
\end{eqnarray*}

Thanks to the Lemma \ref{IPP}, we have 
\begin{eqnarray*}
&&\mathbb{E}[Fg(F)]\\
&=&\frac{1}{\sigma}\int_0^{\infty}e^{-t}\mathbf{E}\left[{\rm Tr}\left(\nabla\left(x\rightarrow g\left(\frac{H(\nabla\phi(x))}{\sigma}\right)\right.\right.\right.\\
&&\hskip2cm\times\nabla H(e^{-t}\nabla\phi(x)+\sqrt{1-e^{-2t}}\nabla\phi(X_{\infty})\biggl)(X)\biggl)\biggl]dt\\
&&-\frac{1}{\sigma}\int_0^{\infty}\frac{e^{-2t}}{\sqrt{1-e^{-2t}}}\mathbf{E}\left[{\rm Tr}\left(\nabla\left(x\rightarrow g\left(\frac{H(Y_0)}{\sigma}\right)\right.\right.\right.\\
&&\hskip2cm\times\nabla H(e^{-t} \nabla\phi(X)+\sqrt{1-e^{-2t}}\nabla\phi(x))\biggl)( X_{\infty})\biggl)\biggl]dt\\
&=&\frac{1}{\sigma^2}\int_0^{\infty}e^{-t}\mathbf{E}\left[g'\left(\frac{H(Y_0)}{\sigma}\right)
\left\langle
({\rm Hess}\phi)(X)\nabla H(Y_0),\nabla H(Y_t)
\right\rangle\right]dt\\
&+&\frac{1}{\sigma}\int_0^{\infty}e^{-2t}\mathbf{E}\left[g(\frac{H(Y_0)}{\sigma}){\rm Tr}\left[\big({\rm Hess}\phi)(X)-({\rm Hess}\phi)(X_{\infty})\big)({\rm Hess}H)(Y_t)\right]\right]dt.
\end{eqnarray*}
We then have
\begin{eqnarray}
\label{911}\\
&&\mathbb{E}[Fg(F)-g'(F)]\notag\\
&=&\frac{1}{\sigma^2}\mathbf{E}\left[g'(F)\left(\int_0^{\infty}e^{-t}\left\langle ({\rm Hess}\phi)(X)\nabla H(Y_0),\nabla H(Y_t)\right\rangle dt-\sigma^2\right)\right]\notag\\
&+&\frac{1}{\sigma}\int_0^{\infty}e^{-2t}\mathbf{E}\left[g(F){\rm Tr}\left[\big(({\rm Hess}\phi)(X)-({\rm Hess}\phi)(X_{\infty})\big)({\rm Hess}H)(Y_t)\right]\right]dt.\notag
\end{eqnarray}
Taking $g(x)=x$ in (\ref{911}), we get
\begin{eqnarray*}
&&\sigma^2
=\int_0^{\infty}e^{-t}\mathbf{E}\left\langle ({\rm Hess}\phi)(X)\nabla H(Y_0),\nabla H(Y_t)\right\rangle dt\\
&+&\sigma\int_0^{\infty}e^{-2t}\mathbf{E}\left[F\,{\rm Tr}\left[\big(({\rm Hess}\phi)(X)-({\rm Hess}\phi)(X_{\infty})\big)({\rm Hess}H)(Y_t)\right]\right]dt.
\end{eqnarray*}
Using Cauchy-Schwarz and Minkowski's inequalities, as well as the bound \eqref{boundTotVar}, we finally have
\begin{eqnarray*}
d_{TV}(F,N)&\leq& A+\sup_{g}\big|\mathbf{E}[C(Tg)]\big|,
\end{eqnarray*}
with $(Tg)(x)=g(x)-x$, $$C(h)=\frac{1}{\sigma}\int_0^{\infty}e^{-2t}h(F){\rm Tr}\left[\big(({\rm Hess}\phi)(X)-({\rm Hess}\phi)(X_{\infty})\big)({\rm Hess}H)(Y_t)\right]dt$$
and where the supremum runs over the set of all functions $g\in\mathcal C^1$ such that $|g(x)|\leq \sqrt{\frac{\pi}{2}}$ and$|g'(x)|\leq 2$ for all $x\in\R$. This is the stated bound.
\end{proof}

\section{Proof of Theorem \ref{main}}\label{mainProof}
In the following, each time $K\subset \R^d$ denotes a convex body such
that ${\rm diam}K\leq 2cd^\alpha$ for some $\alpha,c>0$, we assume
without loss of generality (translation) that $0\in K$ and $K\subset c\,d^\alpha \mathbb{B}^d$.

To prove our main result, we will make use of the following three lemmas.

\begin{lemma}\label{LBoundDist}
Assume that the convex body $K$ of $\R^d$ is such that $K\subset c\,d^\alpha \mathbb{B}^d$ with $c>0$ (independent of $d$) and $\alpha\in [0,\frac12)$, 
and let $X_K:\Omega\to\R^d$ be a random vector distributed according to the Hadwiger-Wills density
(\ref{muK}). Let $\theta>0$ be such that $\ln(\frac{\sqrt{2\pi}}{\theta})<-1$. Then, as $d\to\infty$,
\begin{equation}\label{CondTheta}\mathbb{P}\left\{{\rm dist}(X_K,K)\leq\frac{\sqrt{d}}{\theta}\right\}=o\left(e^{-\frac{d}{2}}\right).\end{equation}
\end{lemma}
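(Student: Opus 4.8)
The plan is to get an explicit upper bound on the density of the scalar random variable $R := {\rm dist}(X_K,K)$ near $0$, integrate it over $[0,\sqrt d/\theta]$, and then check that the resulting quantity is $o(e^{-d/2})$ under the stated hypotheses on $\theta$ and $\alpha$. The key input is the identity in law (\ref{idinlaw}): $\pi R^2 \overset{\rm law}{=} \sum_{j=1}^{d-V_K}\gamma_j$ with $\gamma_j$ i.i.d.\ $\Gamma(\tfrac12,1)$ independent of $V_K$, together with the bound $\mathbb{E}[V_K]\le v_1(K)$ from Corollary \ref{BoundIntr} and the growth estimate $v_1(K)\le v_1(c\,d^\alpha\mathbb{B}^d)\sim c\sqrt{2\pi}\,d^{\frac12+\alpha}$ from (\ref{hello}).

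First I would condition on $V_K=k$. Then $\pi R^2$ is a sum of $d-k$ independent $\Gamma(\tfrac12,1)$ variables, i.e.\ $\pi R^2 \sim \Gamma(\tfrac{d-k}{2},1)$, so for $t>0$,
\begin{equation*}
\mathbb{P}\{R\le t \mid V_K=k\} = \mathbb{P}\{\Gamma(\tfrac{d-k}{2},1)\le \pi t^2\} \le \frac{(\pi t^2)^{\frac{d-k}{2}}}{\Gamma(1+\frac{d-k}{2})},
\end{equation*}
using the elementary tail bound $\mathbb{P}\{\Gamma(a,1)\le s\}\le s^a/\Gamma(a+1)$ (which follows from bounding the incomplete Gamma integrand $u^{a-1}e^{-u}\le u^{a-1}$ on $[0,s]$). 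Plugging in $t=\sqrt d/\theta$ gives $\pi t^2 = \pi d/\theta^2$, and since $d-k\le d$ and $\pi d/\theta^2$ will be $\ge 1$ for $\theta$ in the relevant range, the right-hand side is largest when $k$ is smallest, so one can crudely bound it by the $k=0$ term, or keep the dependence on $k$ and average against the law of $V_K$. Using Stirling, $\Gamma(1+\frac{d-k}{2}) \ge (\frac{d-k}{2e})^{\frac{d-k}{2}}$, so
\begin{equation*}
\mathbb{P}\{R\le \tfrac{\sqrt d}{\theta}\mid V_K=k\} \le \left(\frac{2\pi e\, d}{\theta^2 (d-k)}\right)^{\frac{d-k}{2}}.
\end{equation*}
If we restrict to the event $V_K\le d/2$ (say), then $d/(d-k)\le 2$, and the bracket is at most $4\pi e/\theta^2$; the condition $\ln(\sqrt{2\pi}/\theta)<-1$, i.e.\ $\theta^2 > 2\pi e^2$, makes $4\pi e/\theta^2 < 2/e < 1$, so this contributes at most $(2/e)^{(d-k)/2}$, which on $\{V_K\le d/2\}$ is $(2/e)^{d/4}$ — comfortably $o(e^{-d/2})$? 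Here I need to be a little careful about the exact constant, so the precise threshold in the hypothesis $\ln(\sqrt{2\pi}/\theta)<-1$ should be matched by using $k$ small (not $k\le d/2$): keeping $\Gamma(1+\frac{d-k}2)$ and writing the bound as $\exp\big(\frac{d-k}{2}[\,1+\ln\pi + 2\ln(\sqrt d/\theta) - \ln\frac{d-k}{2}\,]\big)$, and noting $2\ln(\sqrt d/\theta)-\ln\frac{d-k}2 = \ln\frac{2d}{\theta^2(d-k)}$, gives exponent $\frac{d-k}{2}\big(1 + \ln\frac{2\pi d}{\theta^2(d-k)}\big)$, which for $k=o(d)$ behaves like $\frac{d}{2}\big(1+\ln\frac{2\pi}{\theta^2}\big)=\frac d2\big(1+2\ln\frac{\sqrt{2\pi}}{\theta}\big)$, and the hypothesis $\ln\frac{\sqrt{2\pi}}{\theta}<-1$ forces $1+2\ln\frac{\sqrt{2\pi}}\theta<-1$, so this is $\le e^{-d/2}$ to leading order — in fact $o(e^{-d/2})$ once the strict inequality is used.

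Second, I would handle the remaining mass $\mathbb{P}\{V_K > d/2\}$ (or whatever threshold is chosen). Since $V_K\ge 0$ and $\mathbb{E}[V_K]\le v_1(K)=O(d^{1/2+\alpha})$ with $\alpha<1/2$, Markov's inequality gives $\mathbb{P}\{V_K>d/2\}\le 2v_1(K)/d = O(d^{\alpha-1/2})\to 0$; this is only polynomially small, not $o(e^{-d/2})$, so I cannot simply throw it away — instead I would note that on $\{V_K>d/2\}$ the sum $\sum_1^{d-V_K}\gamma_j$ has at least... no: when $V_K$ is large, $d-V_K$ is small, and then $R$ could genuinely be small, so the contribution of large $V_K$ to $\mathbb{P}\{R\le\sqrt d/\theta\}$ is not obviously negligible. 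The fix is to use the much stronger concentration of $V_K$: by the Chernoff/exponential-moment bound for the ultra-log-concave $V_K$ (or directly from (\ref{idinlaw}) and a large-deviations estimate for the Gamma sum), $\mathbb{P}\{V_K > \epsilon d\}$ decays faster than any polynomial, in fact exponentially, so that $\mathbb{P}\{V_K>\epsilon d\}=o(e^{-d/2})$ for suitable small $\epsilon$. Alternatively, and more cleanly, I would choose the cut level for $V_K$ to be $\kappa := C v_1(K) = O(d^{1/2+\alpha})=o(d)$: on $\{V_K\le\kappa\}$ the exponent computation above gives $\frac{d-\kappa}{2}(1+\ln\frac{2\pi d}{\theta^2(d-\kappa)})\le \frac{d}{2}(1+2\ln\frac{\sqrt{2\pi}}{\theta}) + o(d) < -\frac d2 + o(d)$, which is $o(e^{-d/2})$; and on $\{V_K>\kappa\}$ I invoke the exponential concentration of $V_K$ around its $o(d)$ mean to get a bound of order $e^{-cd}=o(e^{-d/2})$ after possibly enlarging $C$. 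Combining the two parts proves (\ref{CondTheta}).

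The main obstacle is the second part: controlling the contribution of the (rare) event that $V_K$ is atypically large, since there the distance can be small and a crude Markov bound is far too weak. Making this rigorous requires either an exponential deviation bound for $V_K$ (available from ultra-log-concavity and the Chernoff method, using $\mathbb{E}[s^{V_K}] = W(sK)/W(K)$ and the estimate $v_k(sK)=s^k v_k(K)$ together with $W(sK)\le$ an explicit function of $s$ and $d$), or a direct argument bounding $\mathbb{P}\{\sum_1^{d-V_K}\gamma_j \le \pi d/\theta^2\}$ by summing the conditional Gamma tail bounds over all $k$ and showing the sum is dominated by the small-$k$ terms. Either route is routine but needs the growth rate $v_1(K)=o(d)$, hence the hypothesis $\alpha<1/2$, to guarantee $\mathbb{E}[V_K]=o(d)$ so that the "bulk" of $V_K$ keeps $d-V_K$ of order $d$; this is exactly the point where the assumption on ${\rm diam}(K)$ enters.
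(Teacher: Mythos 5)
Your proposal is correct but genuinely different in route from the paper's, and it is worth pointing out that your ``main obstacle'' is in fact no obstacle at all once one notices what the averaged conditional bound computes.

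The paper's proof does not use the identity in law~(\ref{idinlaw}) at all. It argues geometrically: the Hadwiger--Wills density is everywhere $\le 1/W(K)\le 1$ (since $W(K)\ge v_0(K)=1$), so $\mathbb{P}\{{\rm dist}(X_K,K)\le t\}$ is bounded by the integral of $e^{-\pi\,{\rm dist}^2(x,K)}$ over the ball $(c\,d^\alpha+t)\mathbb{B}^d$; this is split into the inner ball $c\,d^\alpha\mathbb{B}^d$ (where the integrand is $\le 1$) and the annulus (where, by monotonicity of $K\mapsto e^{-\pi\,{\rm dist}^2(x,K)}$, one may replace $K$ by $c\,d^\alpha\mathbb{B}^d$ and compute a one-dimensional radial integral), and then Stirling finishes. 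Your proof instead conditions on $V_K=k$ and uses the elementary Gamma tail bound. This is a valid alternative and the exponent bookkeeping you do is correct: $1+2\ln\frac{\sqrt{2\pi}}{\theta}<-1$ is exactly what makes the $k=o(d)$ contribution $o(e^{-d/2})$.

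The place where your write-up is weaker than it needs to be is the treatment of large $V_K$. You flag this as ``the main obstacle'' and propose two fixes, but in fact if you simply sum the conditional Gamma bound against the full law of $V_K$ without any truncation, you recover a clean closed form. Since $(\pi t^2)^{(d-k)/2}/\Gamma\!\bigl(1+\tfrac{d-k}{2}\bigr)=t^{d-k}\kappa_{d-k}$ and $\mathbb{P}(V_K=k)=v_k(K)/W(K)$, the averaged bound is
\begin{equation*}
\mathbb{P}\{{\rm dist}(X_K,K)\le t\}\;\le\;\frac{1}{W(K)}\sum_{k=0}^d v_k(K)\,t^{d-k}\kappa_{d-k}
=\frac{{\rm Vol}_d(K+t\mathbb{B}^d)}{W(K)}\le {\rm Vol}_d\bigl((c\,d^\alpha+t)\mathbb{B}^d\bigr),
\end{equation*}
by the Steiner formula~(\ref{steiner}) and $W(K)\ge 1$. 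In other words, the conditioning argument, carried out to the end, does not require any large-deviations estimate for $V_K$: the whole sum collapses to the Steiner polynomial, which is essentially the same volume bound the paper obtains by its direct geometric splitting. Your fix~(b) (``summing the conditional Gamma tail bounds over all $k$'') is thus not merely a workaround but \emph{the} clean route, and it reconnects to the paper's argument. Your fix~(a) (exponential concentration of $V_K$ from ultra-log-concavity and Chernoff) would also work --- ULC gives $\mathbb{P}(V_K=k)\le v_1(K)^k/(k!\,W(K))$, hence a Poisson-type tail, and $v_1(K)=O(d^{1/2+\alpha})$ with $\alpha<\tfrac12$ makes $\mathbb{P}(V_K\ge\epsilon d)$ superexponentially small --- but it is more work than necessary and less transparent.

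One minor point of care you should add when finishing: when estimating ${\rm Vol}_d\bigl((c\,d^\alpha+\sqrt d/\theta)\mathbb{B}^d\bigr)=(c\,d^\alpha+\sqrt d/\theta)^d\kappa_d$, the factor $(1+\theta c\,d^{\alpha-1/2})^d=e^{O(d^{1/2+\alpha})}$ is superpolynomial; it is absorbed by the strict inequality $\ln(\sqrt{2\pi}/\theta)<-1$ only because $\alpha<\tfrac12$ makes this correction $e^{o(d)}$. This is the precise place where the diameter hypothesis enters, matching what you observed in your own closing remarks.
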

\begin{proof}
For all $x\in\mathbb{R}^d$, the function $\psi_x:K\mapsto e^{-\pi {\rm dist}^2(x,K)}$ is increasing for the inclusion 
(i.e., $C\subset K\implies \psi_x(C)\leq\psi_x(K)$).  
Moreover, for all $x\in\R^d$ we have $x=\Pi_K(x)+{\rm dist}(x,K)n(\Pi_K(x))$, where $n(u)$ denotes the outward pointing unit normal at $u\in \partial K$; in particular, $\|x\|\leq c\,d^{\alpha} + {\rm dist}(x,K)$ for all $x\in\R^d$.
Consequently, we have that
\begin{eqnarray*}
&&\mathbb{P}\left\{{\rm dist}(X_K,K)\leq\frac{\sqrt{d}}{\theta} \right\}\\
&\leq &\frac{1}{W(K)}\int_{\left(c\,d^\alpha+\frac{\sqrt{d}}{\theta}\right)\mathbb{B}^d}e^{-\pi {\rm dist}^2(x,K)}dx\\
&=&\frac{1}{W(K)}\left(\int_{c\,d^\alpha \mathbb{B}^d}e^{-\pi {\rm dist}^2(x,K)}dx+\int_{\left(c\,d^\alpha+\frac{\sqrt{d}}{\theta}\right)\mathbb{B}^d\setminus c\,d^\alpha \mathbb{B}^d}e^{-\pi {\rm dist}^2(x,K)}dx\right)\\
&\leq&\frac{1}{W(\{0\})}\left((c\,d^{\alpha})^d\kappa_d+\int_{\left(c\,d^\alpha+\frac{\sqrt{d}}{\theta}\right)\mathbb{B}^d\setminus c\,d^\alpha \mathbb{B}^d}e^{-\pi {\rm dist}^2(x,K)}dx\right)\\
&\leq&(c\,d^{\alpha})^d\kappa_d+\int_{\left(c\,d^\alpha+\frac{\sqrt{d}}{\theta}\right)\mathbb{B}^d\setminus c\,d^\alpha \mathbb{B}^d}e^{-\pi {\rm dist}^2(x,c\,d^\alpha \mathbb{B}^d)}dx\\
&=& (c\,d^{\alpha})^d \kappa_d+d\kappa_d\int_0^{\frac{\sqrt{d}}{\theta}}e^{-\pi r^2}(r+c\,d^\alpha)^{d-1}dr
\end{eqnarray*}
(where we first used that we are integrating a radial function, and then applied a linear change of variable). 
Combining $\Gamma(x+1)\sim \sqrt{2\pi}x^{x+\frac12}e^{-x}$ as $x\to\infty$ with the identity $\kappa_d=\frac{\pi^{\frac{d}2}}{\Gamma(\frac{d}2+1)}$ leads to
the equivalent $\kappa_d\sim \frac{1}{\sqrt{\pi d}}\left(\frac{2\pi e}{d}\right)^{\frac{d}2}$ as $d\to\infty$.
Using moreover the bound $$\int_0^{\frac{\sqrt{d}}{\theta}}e^{-\pi r^2}(r+c\,d^\alpha)^{d-1}dr\leq \left(\frac{\sqrt{d}}{\theta}+c\,d^{\alpha}\right)^{d-1}\int_0^\infty e^{-\pi r^2}dr = \left(\frac{\sqrt{d}}{\theta}+c\,d^{\alpha}\right)^{d-1}$$
 and the fact $\theta$ is such that $\ln(\frac{\sqrt{2\pi}}{\theta})<-1$, we arrive to the announced conclusion.
\end{proof}
\begin{definition}\label{surfaceMeasure}
Fix $r>0$ and let $K\subset \R^d$ be a convex body. Set $p(r)=(p_{0}(r),\ldots,p_{d-1}(r))$, where
$$p_i(r)=\frac{1}{P(r)}(d-i)v_i(K)\kappa_{d-i}r^i$$
with $P(r)$ such that $\sum_{i=0}^{d-1} p_i(r)=1$. By convention, we also define $p(\infty)$ as $(0,\ldots,0,1)$.

\end{definition}

\begin{lemma}\label{AnULClaw}
For all $r>0$, the probability measure $(p_i(r))_{0\leq i\leq d-1}$ defined above is ultra log-concave.
\end{lemma}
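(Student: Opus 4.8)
The plan is to exhibit $(p_i(r))_{0 \le i \le d-1}$ as an explicit coordinatewise product of the intrinsic-volume sequence with a ``geometric-type'' sequence in $i$, and then to check that each factor is ultra log-concave (or at least log-concave in the right normalization), so that the product inherits the property. Concretely, recall that $(x_i)$ is ultra log-concave iff $(i!\,x_i)$ is log-concave, so I would work throughout with the rescaled sequence $q_i := i!\,p_i(r)$ and aim to prove $q_i^2 \ge q_{i-1}q_{i+1}$ for $1 \le i \le d-2$.

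Writing $p_i(r) = \frac{1}{P(r)}(d-i)\,v_i(K)\,\kappa_{d-i}\,r^i$, the normalization constant $P(r)$ and the factor $r^i$ are harmless: the $P(r)$ cancels in the log-concavity ratio, and $r^i$ contributes the factor $(r^i)^2 = r^{i-1}r^{i+1}$, also neutral. So the first step is to reduce the claim to showing that the sequence $a_i := i!\,(d-i)\,\kappa_{d-i}\,v_i(K)$ is log-concave in $i$ on $\{0,\dots,d-1\}$. The second step is to split $a_i = b_i \cdot c_i$ with $b_i := i!\,v_i(K)$ and $c_i := (d-i)\,\kappa_{d-i}$, and recall that a coordinatewise product of two log-concave sequences is log-concave. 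By \eqref{ulc} (equivalently Corollary~\ref{BoundIntr}'s input, i.e. \cite{chevet1976processus,mcmullen1975non}), the sequence $(v_i(K))$ is ultra log-concave, which by the elementary remark after \eqref{ULC} is exactly the statement that $(b_i) = (i!\,v_i(K))$ is log-concave. So the remaining work is entirely about the purely combinatorial factor $c_i = (d-i)\,\kappa_{d-i} = (d-i)\,\pi^{(d-i)/2}/\Gamma(1+\tfrac{d-i}{2})$.

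The third and main step is therefore to prove that $i \mapsto (d-i)\,\kappa_{d-i}$ is log-concave on $\{0,\dots,d-1\}$; equivalently, setting $j = d-i$, that $j \mapsto j\,\kappa_j = j\,\pi^{j/2}/\Gamma(1+\tfrac j2)$ is log-concave in $j \in \{1,\dots,d\}$. Since $\kappa_j = \mathrm{Vol}_j(\mathbb{B}^j)$ and $j\kappa_j = \mathrm{Vol}_{j-1}(\partial \mathbb{B}^j)$ is (up to constants) the surface area of the unit $j$-sphere, this is a classical fact; I would verify the ratio $\frac{(j\kappa_j)^2}{(j-1)\kappa_{j-1}\,(j+1)\kappa_{j+1}} \ge 1$ directly. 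Using $\kappa_{j-1}\kappa_{j+1}/\kappa_j^2 = \frac{\Gamma(1+j/2)^2}{\Gamma(\tfrac12+\tfrac j2)\,\Gamma(\tfrac32+\tfrac j2)} = \frac{\Gamma(1+j/2)}{(1/2+j/2)\Gamma(1/2+j/2)} = \frac{\Gamma(1+j/2)}{\Gamma(3/2+j/2)}\cdot(\tfrac12+\tfrac j2)$, one gets after simplification a ratio of the form $\frac{j^2}{(j-1)(j+1)} \cdot (\text{something involving } \Gamma)$; the $\frac{j^2}{j^2-1} > 1$ part gives room, and log-convexity of $\Gamma$ (Bohr--Mollerup) controls the Gamma part in the favorable direction. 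I expect the only genuinely delicate point to be bookkeeping the endpoint and the two boundary cases $i = 0$ and $i = d-1$ (where $v_0(K) = 1$ and the factor $(d-i)$ degenerates), and checking the convention $p(\infty) = (0,\dots,0,1)$ is consistent as a degenerate ultra-log-concave law; these are routine but must be stated.

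Finally, assembling: $a_i = b_i c_i$ is a product of two nonnegative log-concave sequences hence log-concave, so $q_i = i!\,p_i(r) = a_i r^i / P(r)$ is log-concave (multiplication by the log-concave sequence $r^i$ and by the positive constant $1/P(r)$ preserving the property), which by definition means $(p_i(r))$ is ultra log-concave. The whole argument is a short chain of ``reduce, factor, apply known facts'', and the one computational nugget is the log-concavity of $j \mapsto j\kappa_j$.
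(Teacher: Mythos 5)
Your proof is correct and follows essentially the same decomposition as the paper's: after canceling the harmless $P(r)$ and $r^i$ factors, both arguments reduce the claim to checking that the three factors $v_i(K)$, $(d-i)$ and $\kappa_{d-i}$ are, respectively, ultra log-concave (cited from Chevet/McMullen), log-concave (the trivial $(d-i)^2 \geq (d-i-1)(d-i+1)$), and log-concave (the key step $\kappa_{d-i}^2 \geq \kappa_{d-i-1}\kappa_{d-i+1}$). Your device of passing to $q_i = i!\,p_i(r)$ and invoking closure of log-concavity under coordinatewise products is cosmetically tidier than the paper's direct multiplication of the three inequalities, but it is the same idea. The one substantive divergence is the treatment of the $\kappa$-inequality: you correctly note that it follows at once from log-convexity of $\Gamma$ (Bohr--Mollerup), since $\log\kappa_j = \tfrac{j}{2}\log\pi - \log\Gamma(1+\tfrac{j}{2})$ has a linear part that drops out and a concave part by convexity of $\log\Gamma$; the paper instead derives $\kappa_j^2\geq\kappa_{j-1}\kappa_{j+1}$ from the monotonicity in $j$ of $\E\|N_j\|=\sqrt{2}\,\Gamma(\tfrac{j+1}{2})/\Gamma(\tfrac{j}{2})$, checked via the auxiliary function $\varphi_j(u)=\E\big[\sqrt{\|N_j\|^2+u^2}\big]$. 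Your route is shorter and more classical. Two minor remarks: there is a small algebra slip in your intermediate simplification of $\kappa_{j-1}\kappa_{j+1}/\kappa_j^2$ (you lose a square of $\Gamma$ in one step), but the final invocation of Bohr--Mollerup is unaffected and in the favorable direction; and the boundary-case bookkeeping you flag is moot, since the ultra-log-concavity inequality only constrains interior indices $i\in\{1,\dots,d-2\}$, and the lemma is stated only for $r>0$, so $p(\infty)$ never enters.
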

\begin{proof} We need to prove that for all $i\in\{0,\ldots,d-2\}$,
$$(v_i(K)(d-i)\kappa_{d-i})^2\geq \frac{i+1}{i}v_{i-1}(K)v_{i+1}(K)(d-i-1)(d-i+1)\kappa_{d-i-1}\kappa_{d-i+1}.$$
Since we already know that the sequence of intrinsic volumes is ultra log-concave (see, e.g., \cite{chevet1976processus}[Lemma 4.2]), it is enough to show that
$$(d-i)^2\geq(d-i-1)(d-i+1)$$
(which is trivial) and
$$\kappa^2_{d-i}\geq\kappa_{d-i-1}\kappa_{d-i+1}.$$
The last inequality above is equivalent to showing that 
$$\frac{\Gamma\left(1+\frac{d-i-1}{2}\right)}{\Gamma\left(1+\frac{d-i}{2}\right)}\geq\frac{\Gamma\left(1+\frac{d-i}{2}\right)}{\Gamma\left(1+\frac{d-i+1}{2}\right)}.$$
To prove this inequality,
we can observe that the expectation of the $\chi$-distribution of $j$ degrees of freedom is given by
$$\mathbb{E}[\|N_j\|]=\sqrt{2}\frac{\Gamma\left(\frac{j+1}{2}\right)}{\Gamma\left(\frac{j}{2}\right)}\quad (N_j\sim \mathcal{N}_{j}(0,I_j)).$$
Moreover, the function $\varphi_j:\R\to \R$ defined by $\varphi_j(u)=\mathbb{E}\big[\sqrt{\|N_j\|^2+u^2}\big]$ satisfies $\varphi_j'(u)\geq 0$ if and only if $u\geq 0$.
This implies that $\varphi_j(u)\geq \varphi_j(0)$ for all $u\in\R$, from which it comes that $\mathbb{E}[\|N_j\|]\leq \mathbb{E}[\|N_{j+1}\|]$.
As a result,
$$\frac{\Gamma\left(1+\frac{d-i-1}{2}\right)}{\Gamma\left(1+\frac{d-i}{2}\right)}=\frac{1}{\mathbb{E}[\|N_{d-i+1}\|]}\geq \frac{\Gamma\left(1+\frac{d-i}{2}\right)}{\Gamma\left(1+\frac{d-i+1}{2}\right)}=\frac{1}{\mathbb{E}[\|N_{d-i+2}\|]},$$
which concludes the proof.
\end{proof}

\begin{lemma}\label{crucialLemma}
Let $K\subset\R^d$ be a convex body, 
and let $X_K:\Omega\to\R^d$ be a random vector distributed according to the Hadwiger-Wills density
(\ref{muK}). Set $F=\frac{{\rm dist}^2(X_K,K)-\E[{\rm dist}^2(X_K,K)]}{\sqrt{\Var({\rm dist}^2(X_K,K))}}$. 
Let $X_{\infty}$ be an independent copy of $X_K$ and set
$$Y_t=e^{-t}(X_K-\Pi_K(X_K))+\sqrt{1-e^{-2t}}(X_{\infty}-\Pi_K(X_{\infty})).$$ 
Then
\begin{equation}\label{BoundTVDist}
d_{TV}(F,\mathcal{N}(0,1))\leq A_K+B_K,
\end{equation}
with 
\begin{eqnarray*}
A_K&=&\frac{4}{\sigma^2}\sqrt{\Var\left(\int_0^{\infty}e^{-t}\langle Y,\mathbb{E}_{\infty}[Y_t]\rangle dt\right)},\\
B_K&=&\frac{1.14}{\sigma}\sqrt{\Var\left(e_p\left(\frac{1}{{\rm dist}(X_K,K)}\right)\right)},
\end{eqnarray*}
where $Y=Y_0$ by convention and $e_p$ is the function defined by 
\begin{equation}\label{ep}
e_p(r)=\sum_{i=0}^{d-1}ip_i(r),
\end{equation}
with $p_i(r)$ defined in Definition \ref{surfaceMeasure}.
\end{lemma}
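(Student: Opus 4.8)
The plan is to apply Proposition~\ref{goodlife} with $\phi(x)=\pi\,{\rm dist}^2(x,K)+\log W(K)$ and $H(x)=\|x\|^2$, so that $e^{-\phi}$ is exactly the Hadwiger--Wills density \eqref{muK}. By the lemma on the gradient of the square distance, $\nabla\phi(x)=2\pi(x-\Pi_K(x))$ and $({\rm Hess}\,\phi)(x)=2\pi(I_d-\nabla\Pi_K(x))$ for a.e.\ $x$; since $H(\nabla\phi(X_K))=4\pi^2\,{\rm dist}^2(X_K,K)$, the centred and normalised variable produced by Proposition~\ref{goodlife} is precisely the $F$ of the present statement, and the interpolation $e^{-t}\nabla\phi(X_K)+\sqrt{1-e^{-2t}}\nabla\phi(X_\infty)$ there equals $2\pi$ times the $Y_t$ defined above (replacing $H$ by any positive multiple of $\|x\|^2$ changes nothing, being absorbed by the normalisation defining $F$). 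It then remains to rewrite the two terms of \eqref{boundTVGibbs}. For the term $A$ I would use $\nabla H(y)=2y$ together with the orthogonality relation \eqref{relation}, $\nabla\Pi_K(x)(x-\Pi_K(x))=0$: since $\nabla\phi(X_K)$ is parallel to $X_K-\Pi_K(X_K)$, the symmetric matrix $({\rm Hess}\,\phi)(X_K)=2\pi(I_d-\nabla\Pi_K(X_K))$ acts on it as the scalar $2\pi$, so the integrand in $A$ becomes a fixed multiple of $\langle Y,\mathbb E_\infty[Y_t]\rangle$ with $Y=X_K-\Pi_K(X_K)$; substituting and collecting the constants gives $A_K$.

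For the term $B$, the decisive point is that $H(x)=\|x\|^2$ has constant Hessian $2I_d$, so that ${\rm Tr}\big[(({\rm Hess}\,\phi)(X_K)-({\rm Hess}\,\phi)(X_\infty))({\rm Hess}\,H)(Y_t)\big]=2\big({\rm Tr}({\rm Hess}\,\phi)(X_K)-{\rm Tr}({\rm Hess}\,\phi)(X_\infty)\big)$ does not depend on $t$ and the time integral $\int_0^\infty e^{-2t}\,dt=\tfrac12$ in $C(g)$ collapses. Since ${\rm Tr}({\rm Hess}\,\phi)(x)=2\pi(d-{\rm Tr}\,\nabla\Pi_K(x))$, everything then reduces to the identity $\mathbb E[{\rm Tr}\,\nabla\Pi_K(X_K)\mid{\rm dist}(X_K,K)=r]=e_p(1/r)$ for $r>0$, with $e_p$ as in \eqref{ep}. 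For a polytope $K$ I would read this off from Proposition~\ref{polytopesProp}: conditionally on ${\rm dist}(X_K,K)=r$ the vector $X_K$ is uniform on $S_r$ (item~(1)); by items~(2)--(3), ${\rm Tr}\,\nabla\Pi_K(x)$ equals the dimension $i$ of the unique face of $K$ whose relative interior contains $\Pi_K(x)$; and by item~(5), after dividing all surface areas by $r^{d-1}$, the conditional probability that this dimension equals $i$ is exactly $p_i(1/r)$.

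For a general convex body $K$, I would approximate it by polytopes $K_n\to K$ in the Hausdorff metric and pass to the limit, using the continuity of the intrinsic volumes, of the Hadwiger--Wills densities and of the associated variances, together with the lower semicontinuity of the total variation distance under weak convergence (which suffices, since the upper bound $A_{K_n}+B_{K_n}$ converges); the event $\{X_K\in K\}$, on which ${\rm dist}(X_K,K)=0$, is treated separately and is harmless, having probability at most ${\rm Vol}_d(K)/W(K)$. Granting the identity, and observing that $F$ is a deterministic increasing function of $R:={\rm dist}(X_K,K)$, I would condition $C(g)$ on $R$ and use the independence of $X_\infty$ (so that $\mathbb E_\infty[{\rm Tr}({\rm Hess}\,\phi)(X_\infty)]=\mathbb E[{\rm Tr}({\rm Hess}\,\phi)(X_K)]$), to arrive at $\mathbf E[C(g)]=\tfrac{c}{\sigma}\,\mathbb E[(g(F)-F)(e_p(1/R)-\mathbb E[e_p(1/R)])]$ for an explicit numerical constant $c$; the Cauchy--Schwarz inequality, combined with $\|g\|_\infty\le\sqrt{\pi/2}$ and $\mathbb E[F^2]=1$ (hence $\|g(F)-F\|_{L^2}\le\sqrt{\pi/2}+1$), then bounds $\sup_g|\mathbf E[C(g)]|$ by $B_K$. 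I expect the main obstacle to be the conditional-expectation identity for ${\rm Tr}\,\nabla\Pi_K$, and above all its clean extension from polytopes to arbitrary convex bodies; the rest is bookkeeping of universal constants.
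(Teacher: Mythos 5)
Your proof is correct and follows essentially the same route as the paper: apply Proposition \ref{goodlife} with a quadratic $H$ and the Hadwiger--Wills potential, use the orthogonality relation \eqref{relation} to simplify the $A$-term, use the constancy of ${\rm Hess}\,H$ to collapse the time integral in the $B$-term, derive the conditional identity $\mathbb{E}[{\rm Tr}\,\nabla\Pi_K(X_K)\mid{\rm dist}(X_K,K)]=e_p\left(1/{\rm dist}(X_K,K)\right)$ from Proposition \ref{polytopesProp} in the polytopal case, and pass to general $K$ by polytopal approximation together with lower semicontinuity of $d_{TV}$. One small remark: you take $\phi=\pi\,{\rm dist}^2(\cdot,K)+\log W(K)$ so that $e^{-\phi}$ is exactly the density \eqref{muK}, whereas the paper instead sets $\phi={\rm dist}^2(\cdot,K)$ and $H(x)=x^2/4$ (dropping the factor $\pi$); this only shifts the universal constants, which you rightly leave to bookkeeping, and your normalisation is arguably the cleaner one.
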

\begin{proof} The proof of this lemma is divided into two steps: first, we prove it for polytopes, and then we extend it for general convex sets through an approximation argument.

\medskip
\textit{\underline{Step 1: Case where $K$ is a polytope.}}
Let $K\subset\mathbb{R}^d$ be a polytope. Using the notations and results from Proposition \ref{polytopesProp},
 we have
 \begin{eqnarray*}
 &&\mathbb{P}[{\rm Tr}(\nabla\Pi_K(X_K))=i\,|\,{\rm dist}(X_K,K)]\\
 &=&\mathbb{P}[X_K\in C_{F}\,\mbox{ for some }F\in \mathcal F_i\,|\,{\rm dist}(X_K,K)]\\
&=&\frac{{\rm Vol}_{d-1}(\cup_{F\in\mathcal F_i}C_F\cap S_{{\rm dist}(X_K,K)})}{{\rm Vol}_{d-1}(S_{{\rm dist}(X_K,K)})}
=\frac{(d-i)v_i(K)\kappa_{d-i}\left(\frac{1}{{\rm dist}(X_K,K)}\right)^i}{\sum_{j=0}^{d-1}(d-j)v_j(K)\kappa_{d-j}\left(\frac{1}{{\rm dist}(X_K,K)}\right)^j}\\
&=&p_i\left(\frac{1}{{\rm dist}(X_K,K)}\right).\end{eqnarray*}
Then, on $\{{\rm dist}(X_K,K)>0\}$,
\begin{equation}\label{crucialRelationship}\mathbb{E}[{\rm Tr}(\nabla\Pi_K(X_K))\,|\,{\rm dist}(X_K,K)]=\sum_{i=0}^{d-1}ip_i\left(\frac{1}{{\rm dist}(X_K,K)}\right)=e_p\left(\frac{1}{{\rm dist}(X_K,K)}\right).
\end{equation}

We have that ${\rm dist}^2(X,K)=\|X-\Pi_K(X)\|^2=H(\nabla\phi(X))$ with 
$$H:x\mapsto \frac{1}{4}x^2\quad\mbox{and}\quad\phi:x\mapsto {\rm dist}^2(x,K).$$ 
We can apply Proposition \ref{goodlife} to obtain
\begin{eqnarray*}
d_{TV}(F,N)\leq A_K+B_K,
\end{eqnarray*}
with 
\begin{eqnarray*}
A_K&=&\frac{2}{\sigma^2}\sqrt{\Var\left(\int_0^{\infty}e^{-t}\langle Y,\mathbb{E}_{\infty}[Y_t]\rangle dt\right)},
\end{eqnarray*} 
(we also used (\ref{relation})) and
\begin{eqnarray*}
B_K&=&\frac{1}{2\sigma}\sup_g
\left|
{\bf E}\left\{ (g(F)-F){\rm Tr}\big[\nabla\Pi_K(X_K)-\nabla \Pi_K(X_\infty)]\right\}
\right|
\end{eqnarray*}
Moreover, 
\begin{eqnarray*}&&\mathbf{E}[\left(g(F)-F\right){\rm Tr}(\nabla\Pi_K(X_K)-\nabla\Pi_K(X_{\infty}))]\\
&=&\mathbf{E}\big[\left(g(F)-F\right)\\
&&\times\left(\mathbb{E}[{\rm Tr}(\nabla\Pi_K(X_K)|{\rm dist}(X_K,K)]-\mathbb{E}_{\infty}[\mathbb{E}_{\infty}[\nabla\Pi_K(X_{\infty}))|{\rm dist}(X_{\infty},K)]]\right)\big]\\
&\leq&\sqrt{\mathbb{E}[\left(g(F)-F\right)^2]}\sqrt{\Var(\mathbb{E}[{\rm Tr}(\nabla\Pi_K(X_K))|{\rm dist}(X_K,K)])}\\
&\leq&\sqrt{\mathbb{E}[\left(g(F)-F\right)^2]}\sqrt{\Var\left(e_p\left(\frac{1}{{\rm dist}(X_K,K)}\right)\right)}
\end{eqnarray*}
where we conditioned with respect to ${\rm dist}(X_K,K)$, then used the Cauchy-Schwarz inequality and then the identity \eqref{crucialRelationship}. Using the bounds on $g$, the conclusion follows.

\medskip
\textit{\underline{Step 2: General case}}.
Let $K\subset\mathbb{R}^d$ be a convex body. We know that any convex body can be approximated in the Hausdorff metric by polytopes (see e.g. \cite{schneider1981approximation}) that is, there is a sequence $(P_n)_{n\in\mathbb{N}^*}$ of polytopes in $\mathbb{R}^d$ such that for all $n$, $P_n\subset K$ and $$\max(\max_{x\in P_n}\min_{x\in K}\|x-y\|,\max_{x\in K}\min_{x\in P_n}\|x-y\|)\underset{n\rightarrow\infty}\longrightarrow 0.$$
As a consequence
\begin{itemize}
\item For all $i\in\{0,d\}$, the intrinsic volume $V_i(P_n)$ converges to $V_i(K)$ as $n\rightarrow\infty$,
\item $\sup_{x\in\mathbb{R}^d}\big|{\rm dist}(x,K)-{\rm dist}(x,P_n)\big|\underset{n\rightarrow\infty}\longrightarrow 0$.
\end{itemize}
For all $n\in\mathbb{N}^*$, let us denote by $X_n$ a random vector having the distance law with respect to $P_n$ and $X$ having the distance law with respect to $K$. Then, 
\begin{itemize}
\item the densities of $X_n$ converge pointwise to the density of $X$.
\item the densities of $X_n$ are uniformly bounded in $n$ by the function $x\mapsto e^{-\pi {\rm dist}^2(x,K)}$ which verifies $\int_{\mathbb{R}^d}\|x\|^ke^{-\pi {\rm dist}^2(x,K)}dx<\infty.$ 
\end{itemize}
As a consequence, by dominated convergence, we have that 
$$\mathbb{E}[{\rm dist}^2(X^n,P_n)]\underset{n\rightarrow\infty}\longrightarrow\mathbb{E}[{\rm dist}^2(X,K)],$$
$$\Var({\rm dist}^2(X^n,P_n))\underset{n\rightarrow\infty}\longrightarrow\Var({\rm dist}^2(X,K)),$$
$$d_{TV}(F,N)\leq\lim\inf_{n\in\mathbb{N}^*} d_{TV}(F_n,N)\leq\lim\inf_{n\in\mathbb{N}^*}(A_{P_n}+B_{P_n})=A_K+B_K.$$
where the third inequality in the last line comes from Step 1. This conclude the proof.
\end{proof}

\begin{proof}[\nopunct]\textit{Proof of Theorem \ref{main}:}
We have that $d_{TV}(F,N)\leq A_{K}+B_{K}$ with $A_{K}$ and $B_{K}$ given in Lemma \ref{crucialLemma}. We will bound separately the terms $A_K$ and $B_K$ to obtain our main result. For simplicity, we write $A$ and $B$ instead of $A_K$ and $B_K$.

\medskip
\textit{\underline{Step 1: Bounding the term $A$}}.
To bound this standard deviation, we shall apply the Brascamp-Lieb inequality \eqref{BC}. However, since $X$ is not a strongly log-concave vector, we need to introduce a modified version. Fix $d\in\mathbb{N}^*$, let $\epsilon>0$ and let $X_{\epsilon}$ be a random vector with density proportional to $e^{-\phi_{\epsilon}}$, with $\phi_{\epsilon}:x\mapsto \pi {\rm dist}^2(x,K)+\epsilon\pi \|x\|^2$. We have $$({\rm Hess}~\phi_{\epsilon})(x)=2\pi\big[(1+\epsilon)I-\nabla\Pi_{K}(x)\big],$$
with $I$ the $d\times d$-identity matrix. 
Since $\Pi_{K}$ is a contracting operator, we have that $X_{\epsilon}$ is $2\pi\epsilon$-strongly log-concave.
Furthermore, thanks to the dominated convergence theorem, we can write
$$A=\frac{4}{\sigma^2}\lim_{\epsilon\rightarrow 0}\sqrt{\Var\left(\int_0^{\infty}e^{-t}\langle Y^\epsilon,\mathbb{E}_{\infty}[Y^\epsilon_t]\rangle dt\right)}.$$
Set 
\begin{eqnarray*}
f_\epsilon(x)&=&\int_0^\infty \langle e^{-t} \nabla\phi_\epsilon,e^{-t}\nabla\phi_\epsilon(x)+\sqrt{1-e^{-2t}}\mathbb{E}(\nabla \phi_\epsilon(X^\epsilon))\rangle dt\\
&=&\|\nabla \phi_\epsilon(x)\|^2 + \langle \nabla\phi_\epsilon(x), \mathbb{E}\nabla \phi_\epsilon(X^\epsilon)\rangle.
\end{eqnarray*}
We can now apply the Brascamp-Lieb inequality \eqref{BC} to obtain:
\begin{eqnarray*}
&&\Var\left(\int_0^{\infty}e^{-t}\langle Y^\epsilon,\mathbb{E}_{\infty}[Y^\epsilon_t]\rangle dt\right)=\Var(f_\epsilon(X^\epsilon))\\
&\leq& \mathbb{E}\left[\nabla f_\epsilon(X^{\epsilon})^T\left({\rm Hess}\,\phi_\epsilon(X^\epsilon)\right)^{-1}\nabla f_\epsilon(X^{\epsilon})\right].
\end{eqnarray*}
We have
\begin{eqnarray*}
\nabla f_\epsilon(x)&=& 2({\rm Hess}\,\phi_\epsilon(x))\nabla \phi_\epsilon(x) + ({\rm Hess}\,\phi_\epsilon(x))\mathbb{E}(\nabla \phi_\epsilon(X^\epsilon))\\
\nabla\phi_\epsilon(x)&=&2\pi\big((1+\epsilon)x- \Pi_K(x)\big)\\
{\rm Hess}\,\phi_\epsilon(x)&=&2\pi\big((1+\epsilon)I - \nabla \Pi_K(x)\big).
\end{eqnarray*}
In particular,
$$
\big({\rm Hess}\,\phi_\epsilon(x)\big)^{-1}\nabla f_\epsilon(x) = 2 \nabla \phi_\epsilon(x) + \mathbb{E}\nabla \phi_\epsilon(X^\epsilon).
$$
As a result,
\begin{eqnarray*}
\Var(f_\epsilon(X^\epsilon)) &\leq&\sqrt{10} \sqrt{\mathbb{E}\|\nabla f_\epsilon(X^\epsilon)\|^2} \sqrt{\mathbb{E}\|\nabla \phi_\epsilon(X^\epsilon)\|^2}.
\end{eqnarray*}
But, for $\epsilon<1$ (say)
\begin{eqnarray*}
\|\nabla f_\epsilon(x)\| &\leq& 2\times 2\pi\|\nabla \phi_\epsilon(x)\|+ 2\pi \mathbb{E}\|\nabla\phi_\epsilon(X^\epsilon)\|\\
\Rightarrow \mathbb{E}\|\nabla f_\epsilon(X^\epsilon)\|^2 &\leq&10(2\pi)^2 \,\mathbb{E}\|\nabla\phi_\epsilon(X^\epsilon)\|^2,
\end{eqnarray*}
and so
$$
\Var(f_\epsilon(X^\epsilon)) \leq 20\pi\,\E\|\nabla \phi_\epsilon(X^\epsilon)\|^2.
$$
Finally,
\begin{eqnarray*}
A&\leq& \frac4{\sigma^2}\sqrt{20\pi}\sqrt{\E\|\nabla \phi(X_K)\|^2}\\
&\leq& \frac4\sigma^2\sqrt{20\pi}\times 2\pi\sqrt{\mathbb{E}\big[{\rm dist}^2(X_K,K)\big]}\leq \frac{16\sqrt{5}\pi^{3/2}}{\sigma}.
\end{eqnarray*}
where the last inequality follows from Lemma \ref{relationsVarEsp}(2). Moreover, since $K_d\subset d^\alpha\mathbb{B}$, we have, thanks to Lemma \ref{relationsVarEsp} again and  Corollary \ref{BoundIntr}, 
\begin{equation}\label{LBoundSigma1}\sigma^2\geq\mathbb{E}[{\rm dist}^2(X_K,K)]=\frac{d-\mathbb{E}[V_K]}{2}\geq \frac{d-v_1(d^\alpha\mathbb{B})}{2}\geq \frac{d-\frac{d^{1+\alpha}}{\Gamma(1+\frac{d}{2})}}{2}.\end{equation}
Using Stirling formula, we have that there is some $d_0\in\mathbb{N}^*$ such that for all $d\geq d_0$,
\begin{equation}\label{LBoundSigma2}\sigma\geq Kd^\frac{1}{2}\end{equation} for some $K>0$, and then $A=O\left(\frac{1}{\sqrt{d}}\right)$ as $d\rightarrow\infty$.

\medskip 
\textit{\underline{Step 2: Bounding the term $B$:}}
We now want to prove that 
\begin{equation}\label{toBfree}
\frac{1}{\sigma}\sqrt{\Var\left(e_p\left(\frac{1}{{\rm dist}(X_K,K)}\right)\right)}
= O\left(\frac{1}{d^{\frac{1}{2}-\alpha}}\right).
\end{equation}
Taking \eqref{LBoundSigma2} into account, it is enough to show that $$\Var\left(e_p\left(\frac{1}{{\rm dist}(X_K,K)}\right)\right)=O(d^{2\alpha}).$$

For this, let us first smoothen the random variable $\frac{1}{{\rm dist}(X_K,K)}$ in the following way: let us define $\nu:\mathbb{R}_+\to\mathbb{R}$ as the following function:
\begin{equation*}\nu(x):=\left\{
      \begin{array}{cl}
        x &\mbox{ if }x\leq 1\\
        1 &\mbox{ else}.
      \end{array}
    \right.
    \end{equation*}
Observe from (\ref{ep}) that $e_p\left(\frac{1}{{\rm dist}(x,K)}\right)\leq d$ for all $x$. Then, 
\begin{eqnarray*}&&\left|\Var\left(e_p\left(\frac{1}{{\rm dist}(X_K,K)}\right)\right)-\Var\left(e_p\left(\nu\left(\frac{1}{{\rm dist}(X_K,K)}\right)\right)\right)\right|\\&\leq&\left|\mathbb{E}\left[e_p\left(\frac{1}{{\rm dist}(X_K,K)}\right)^2-e_p\left(\nu\left(\frac{1}{{\rm dist}(X_K,K)}\right)\right)^2\right]\right|\\
&&+\left|\mathbb{E}\left[e_p\left(\frac{1}{{\rm dist}(X_K,K)}\right)\right]-\mathbb{E}\left[e_p\left(\nu\left(\frac{1}{{\rm dist}(X_K,K)}\right)\right)\right]\right|\\&&\times\left|\mathbb{E}\left[e_p\left(\frac{1}{{\rm dist}(X_K,K)}\right)\right]+\mathbb{E}\left[e_p\left(\nu\left(\frac{1}{{\rm dist}(X_K,K)}\right)\right)\right]\right|\\
\end{eqnarray*}
\begin{eqnarray*}
&\leq&\left|\mathbb{E}\left[\left(e_p\left(\frac{1}{{\rm dist}(X_K,K)}\right)^2-e_p\left(\nu\left(\frac{1}{{\rm dist}(X_K,K)}\right)\right)^2\right)
{\bf 1}_{\{{\rm dist}(X_K,K)<1\}}
\right]\right|\\
&&+\left|\mathbb{E}\left[\left(e_p\left(\frac{1}{{\rm dist}(X_K,K)}\right)-e_p\left(\nu\left(\frac{1}{{\rm dist}(X_K,K)}\right)\right)\right)
{\bf 1}_{\{{\rm dist}(X_K,K)<1\}}
\right]\right|\\&&\times(2d)\\
&\leq& 3d^2\,
\mathbb{P}({\rm dist}(X_K,K)<1) \leq 3d^2\,
\mathbb{P}\left({\rm dist}(X_K,K)<\frac{\sqrt{d}}{\theta}\right) \leq 3d^2\,e^{-\frac{d}2},
\end{eqnarray*} (The last inequality comes from Lemma \ref{LBoundDist}). 

We can now repeat the same procedure as in Step 1 (that is, to apply the Brascamp-Lieb inequality to a modified version $X^{\epsilon}$ and then use the dominated convergence theorem) to obtain:
\begin{eqnarray}\label{estimate}
&&\Var\left(e_p\left(\nu\left(\frac{1}{{\rm dist}(X_K,K)}\right)\right)\right)\\
&\leq&\frac1{2\pi} \lim_{\epsilon\rightarrow 0}\mathbb{E}[\nabla f_{\nu}(X^{\epsilon})^T((1+\epsilon)I-\nabla\Pi_K(X^{\epsilon}))^{-1}\nabla f_{\nu}(X^{\epsilon})]\notag\end{eqnarray}
with $f_{\nu}(y)=e_p\left(\nu\left(\frac{1}{{\rm dist}(y,K)}\right)\right)$. 
We have for all $y\in\mathbb{R}^d,$
\begin{eqnarray*}
&&\nabla f_\nu(y)\\
&=&\frac{(\Pi_K(y)-y)\mathbf{1}_{\{\|y-\Pi_K(y)\|\geq 1\}}}{{\rm dist}^3(y,K)}\left[\frac{\sum_{i=0}^{d-1}i^2v_i(K)(d-i)\kappa_{d-i}\left(\frac{1}{{\rm dist}(y,K)}\right)^{i-1}}{P\left(\frac{1}{{\rm dist}(y,K)}\right)}\right.\\
&&\left.\hskip2cm\quad-\frac{{\rm dist}(y,K)\left(\sum_{i=0}^{d-1}iv_i(K)(d-i)\kappa_{d-i}\left(\frac{1}{{\rm dist}(y,K)}\right)^{i}\right)^2}{P\left(\frac{1}{{\rm dist}(y,K)}\right)^2}\right]\\
&&=\frac{(\Pi_K(y)-y)\mathbf{1}_{\{\|y-\Pi_K(y)\|\geq 1\}}}{{\rm dist}(y,K)}var_p\left(\frac{1}{{\rm dist}^2(y,K)}\right),
\end{eqnarray*}
with $var_p(r)=\sum_{i=0}^{d-1}i^2p_i(r)-e_p(r)^2$.
Pluging this into \eqref{estimate} yields:
\begin{eqnarray*}
&&\Var\left(e_p\left(\nu\left(\frac{1}{{\rm dist}(X_K,K)}\right)\right)\right)\\
&\leq&\frac1{2\pi}\,\mathbb{E}\left[
\nabla f_\nu(X_K)^T(I-\nabla\Pi_K(X_K))^{-1}\nabla f_\nu(X_K)
\right]\\
&=&\frac{1}{2\pi}\mathbb{E}\left[\|\nabla f_\nu(X_K)\|^2\right]\quad\mbox{(using (\ref{relation}))}\\
&=& \frac1{2\pi}\mathbb{E}\left[{\frac{\mathbf{1}_{\{{\rm dist}(X_K,K)\geq 1\}}}{{\rm dist}^2(X_K,K)}var^2_p\left(\frac{1}{{\rm dist}(X_K,K)}\right)}\right].
\end{eqnarray*}
Invoking again Lemma \ref{LBoundDist}, we have
\begin{eqnarray}
&&\Var\left(e_p\left(\nu\left(\frac{1}{{\rm dist}(X_K,K)}\right)\right)\right)\notag\\
&\leq&\frac{1}{2\pi}\left\{d^4e^{-\frac{d}2}+
\mathbb{E}\left[{\frac{\mathbf{1}_{\{{\rm dist}(X_K,K)>\frac{\sqrt{d}}{\theta}\}}}{{\rm dist}^2(X_K,K)}var^2_p\left(\frac{1}{{\rm dist}(X_K,K)}\right)}\right]\right\}.\label{estimate2}
\end{eqnarray}
We have that $p(x)$ is an ultra-log concave measure for all $x>0$ thanks to Lemma \ref{AnULClaw}. 
Morever
$K\subset d^{\alpha}\mathbb{B}^d$. By monotonicity of the intrinsic volumes and Proposition \ref{ULCVar}, we deduce for all $r>0$ the bound
$$var_p(r)\leq \frac{p_1(r)}{p_0(r)}\leq r\,d^{\alpha}\,v_1(\mathbb{B}^d)\,\frac{\kappa_{d-1}}{\kappa_d}\,\frac{d-1}{d}=O(d^{1+\alpha})r.$$
Plugging this into \eqref{estimate2} yields
$$\Var\left(e_p\left(\nu\left(\frac{1}{{\rm dist}(X_K,K)}\right)\right)\right)=O(d^{2\alpha}),$$
implying in turn (\ref{toBfree}).

\bigskip

\textit{\underline{Step 3: Conclusion}}. By combining the bounds for $A$ and $B$ obtained in Steps 1 and 2, we obtain the rate of Theorem \ref{main}, that is,
$$d_{TV}(F,N)=O(d^{\alpha-\frac{1}{2}}).$$
\end{proof}

\bigskip

\noindent
{\bf Acknowledgments}. We thank Michele Stecconi for his help in proving the first point of Proposition \ref{polytopesProp}.
We were supported (in part) by the Luxembourg National Research Fund O18/12582675/APOGee.

\bibliographystyle{plain}

\end{document}